\def\R{{\Bbb R}}
\def\Om{\Omega}
\def\f{\frac}
\def\p{\partial}
\def\q{\quad}
\def\na{\nabla}
\def\n{\mathbf{n}}
\def\x{\mathbf{x}}
\def\y{\mathbf{y}}
\def\z{\mathbf{z}}
\def\A{{\mathbf A}}
\def\B{\mathbf{B}}
\def\bi{\begin{itemize}} \def\ei{\end{itemize}}
\def\be{\begin{eqnarray*}}
\def\ee{\end{eqnarray*}}
\def\eref#1{(\ref{#1})}
\def\diam{\mathop{\rm diam}\nolimits}
\def\etal{{\it et al}\;}
\def\0{{\mathbf 0}}
\newcommand{\beq}{\begin{equation}}
\newcommand{\eeq}{\end{equation}}
\title{ Electrical impedance tomography-based pressure-sensing using conductive membrane }
\author{Habib Ammari\footnotemark[2]\
\and Kyungkeun Kang\footnotemark[3]\ \footnotemark[5]
\and Kyounghun Lee\footnotemark[4]\ \footnotemark[5]
\and Jin Keun Seo\footnotemark[4]\ \footnotemark[5]
}
\begin{document}
\maketitle

\renewcommand{\thefootnote}{\fnsymbol{footnote}}

\footnotetext[2]{Department of Mathematics and Applications, Ecole Normale Sup\'erieure, 45 Rue d'Ulm, 75005 Paris, France ({\tt habib.ammari@ens.fr}).}
\footnotetext[3]{Department of Mathematics, Yonsei University 50 Yonsei-Ro, Seodaemun-Gu, Seoul 120-749, Korea ({\tt kkang@yonsei.ac.kr}). }
\footnotetext[4]{Department of Computational Science and Engineering, Yonsei University 50 Yonsei-Ro, Seodaemun-Gu, Seoul 120-749, Korea ({\tt imlkh@yonsei.ac.kr, seoj@yonsei.ac.kr}).  }
\footnotetext[2]{The first author was supported  by the ERC Advanced Grant Project MULTIMOD--267184.}
\footnotetext[5]{The second, third, and fourth authors were supported by the National Research Foundation of Korea (NRF) grant funded by the Korean government (MEST) (No. 2011-0028868, 2012R1A2A1A03670512).}

\renewcommand{\thefootnote}{\arbic{footnote}}

\slugger{siap}{xxxx}{xx}{x}{x--x}%slugger should be set to mms, siap, sicomp, sicon, sidma, sima, simax, sinum, siopt, sisc, or sirev

\begin{abstract} This paper presents a mathematical framework for a flexible pressure-sensor model using electrical impedance tomography (EIT). When  pressure is applied to a conductive membrane patch with clamped  boundary, the pressure-induced surface deformation results in a change in the conductivity distribution. This change can be detected in the current-voltage data ({\it i.e.,} EIT data) measured on the boundary of the membrane patch. Hence, the corresponding inverse problem is to reconstruct the pressure distribution from the data.  The 2D apparent conductivity (in terms of EIT data) corresponding to the surface deformation is anisotropic. Thus, we consider a constrained inverse problem by restricting the coefficient tensor to the range of the map from pressure to 2D-apparent conductivity. This paper provides theoretical grounds for the mathematical model of the inverse problem. We develop a reconstruction algorithm based on a careful sensitivity analysis. We demonstrate the performance of the reconstruction algorithm through numerical simulations to validate its feasibility for future experimental studies.
\end{abstract}

\begin{keywords}
electrical impedance tomography, pressure sensing, conductive membrane, inverse problem, prescribed mean curvature equation.
\end{keywords}

\begin{AMS}
35R30, 35J25, 53A10
\end{AMS}
% 35-XX		Partial differential equations
% 	35R30  		Inverse problems
% 	35J25  		Boundary value problems for second-order elliptic equations

% 53-XX		Differential geometry
%   53A10  		Minimal surfaces, surfaces with prescribed mean curvature

\pagestyle{myheadings}
\thispagestyle{plain}
\markboth{H. Ammari, K. Kang, K. Lee, and J. K. Seo}{Electrical impedance tomography-based pressure-sensing}

\section{Introduction}
There is a growing demand for cost-effective flexible pressure sensors. These devices have  wide   potential applicability, including in smart textiles \cite{Comert2013,Loruss2004,Loyola2010}, touch screens \cite{Han2014}, artificial skins \cite{Tawil2011}, and wearable health monitoring technologies \cite{Li2011,Merritt2009}. Electrical measurements have recently been used to measure the pressure-induced surface deformation of  conductive membranes. In particular, electrical impedance tomography (EIT) has been used to develop  flexible pressure sensors \cite{Peterson2011,YLeq2012,Yao2013}, because it allows the electromechanical behavior of an electrically conducting film to be monitored. When a pressure-sensitive conductive sheet is exposed to pressure, the deformation of the surface  alters the conductivity distribution, which can be detected by an EIT system. However, rigorous  studies employing mathematical modeling and reconstruction methods have not yet been conducted. The purpose of this paper is to provide a  systematic mathematical framework for an EIT-based flexible pressure sensor.

Our rigorous mathematical analysis is based on the consideration of  a simple model of an EIT-based pressure-sensor using a thin, flexible conductive membrane whose  electrical conductance is directly related to pressure-induced deformation. We assume that the conductive membrane is stretched over a fixed frame and has a number of electrodes  placed on its  boundary as shown in Figure \ref{membrane}. As in a standard EIT system, we use all adjacent pairs of electrodes to inject currents and  measure induced boundary voltages between all neighboring pairs of electrodes to get a current-voltage data set, which is a discrete version of a Neumann-to-Dirichlet map. The current-voltage data can probe any external pressure loaded onto the membrane, because the pressure-induced surface deformation results in a change of the current density distribution over the surface, which leads to a change of the current-voltage data. Hence, the change in the current-voltage data can be viewed as a non-linear function of pressure. The inverse problem in this model is to identify the pressure (equivalently the surface deformation) from the boundary current-voltage data.
\begin{figure}[!h]
\centering
\includegraphics[scale=.4]{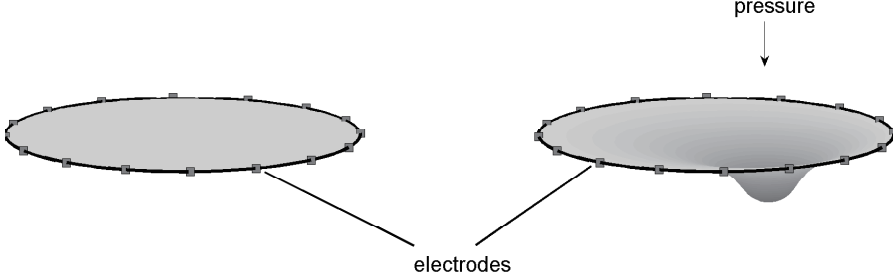}
\vspace{-1cm}\caption{\label{membrane}Conductive membrane attached electrodes on the boundary in the absence of pressure(left) and presence of pressure(right).}
\end{figure}

This paper provides a derivation of an EIT-based pressure-sensing model, which describes the explicit relationship between the measured current-voltage data and the pressure. The mathematical model is associated with an elliptic partial differential equation (PDE) with an anisotropic coefficient, which comes from the pressure-induced surface deformation. To be precise, let $\Om$ be a two-dimensional domain with a smooth boundary $\partial \Omega$ occupying the un-deformed membrane in the absence of any pressure. We denote the standard Sobolev space of order $s$ as $H^s(\partial \Omega)$.

Let $p$ be the pressure and $w_p$ be the solution of
  \begin{equation} \label{Model2}
\left\{
\begin{array}{cl}
\nabla\cdot\left(\frac{1}{\sqrt{1+|\nabla w_p|^2}}\nabla w_p\right)=p &\mbox{ in } \Omega,\\
w_p=0 &\mbox{ on } \p\Om.\\
\end{array}
\right.
\end{equation}

Under pressure $p$, the current-voltage data $(g,f)\in H^{-\f{1}{2}}(\p\Om)\times H^{\f 12}(\p\Om)$ are dictated by $f=u_p|_{\p\Om}$ with $u_p$ being the solution of the elliptic PDE,
  \begin{equation} \label{Model}
\left\{
\begin{array}{l}
\nabla\cdot\left( \gamma_p \nabla u_p\right)=0 \quad \mbox{ in }\Om, \\
(\gamma_p\na u_p)\cdot\mathbf{\nu}|_{\p\Om}~=g, \quad \int_{\partial \Om} u_p =0,
\end{array}
\right.
\end{equation}
where  $$\gamma_p=I -\frac{1}{1+|\nabla w_p|^2}\nabla w_p\nabla w_p^T.$$
Here,
$I$ is the identity matrix, the superscript $T$ denotes the transpose, $\nu$ the unit outward normal vector to $\p\Om$, and  $\int_{\p\Om} g=0$.

The standard  Neumann-to-Dirichlet map $\Lambda_{\gamma_p}$ is defined by $\Lambda_{\gamma_p}(g)=u_p|_{\p\Om}$ with $u_p$ being the solution of \eref{Model}.
We cannot invert the map $\gamma_p\mapsto \Lambda_{\gamma_p}$ with the existing EIT reconstruction methods because of the well-known non-uniqueness result of the inverse problem: there are infinitely many anisotropic coefficients $\gamma$ such that $\Lambda_{\gamma_p}=\Lambda_{\gamma}$.  Hence, we must consider the constrained inverse problem of recovering anisotropic coefficient within the set of coefficient tensors associated with pressures.  Taking account of the fact that two different pressures $p$ and $-p$ produce the same Neumann-to-Dirichlet map, we need to impose a proper constraint on pressures.

Next, we propose a pressure reconstruction method with the standard $N$-channel EIT system. Owing to the quadratic structure of  $\nabla w_p\nabla w_p^T$ in $\gamma_p$, we cannot expect a linearized reconstruction method for $p$,  even assuming that pressure is small. Regarding $p$ as a piecewise constant function $p=\sum_k p_{k}\chi_{T_k}$,  through  the standard discretization of the domain into small elements, $T_k$, the inverse problem can be approximated by solving a large linear system with a large number of unknowns involving all possible products $p_{k}p_{\ell}$. (Here, $\chi_{T_k}$ is the indicator function of  $T_k$.) Given that  most of the columns of the matrix  have relatively small effect  on the data,  we consider  a reduced linear system by eliminating most of the columns.  Various numerical simulations verify the feasibility of the reconstruction algorithm.

In section~\ref{sec:framework}, we formulate the mathematical model for the EIT-based membrane pressure sensor, and present uniqueness results.
In section~\ref{sec:recon_method}, we propose a reconstruction method to recover the pressure. In section~\ref{sec:numerical_results}, we develop a reconstruction algorithm based on sensitivity analysis, and validate the algorithm by numerical simulation results.

This mathematical study of an EIT-based flexible pressure sensor is in an early stage. The proposed mathematical model requires the assumption of incompressibility, whereas there are many flexible materials that are not incompressible. Constructing a mathematical model that includes compressibility will be a future research topic.
\section{Mathematical Framework}\label{sec:framework}
\subsection{Formulation of the forward problem}
Assume that a thin conductive membrane at rest occupies a two-dimensional bounded domain $\Om\subset\R^2$ with a smooth boundary $\p\Om$. Here, the thickness of the membrane is uniform. Assume that the conductivity of the membrane is homogeneous.  Let  $\Om_{d_0}=\{ \x\in\Om~: \mbox{dist}(\x,\p\Om)> d_0\}$ with $d_0>0$. Assume that a pressure $p$ lies in the set
$$\mathfrak{S}=\{ p\in L^\infty(\Om)~:  \| p\|_{L^\infty(\Om)} < \alpha, ~\mbox{supp} (p) \subset \Om_{d_0} \},$$ where $\alpha$ is a positive number. (The assumption $ \| p\|_{L^\infty(\Om)} < \alpha$ is only used  to  guarantee existence and uniqueness of the prescribed mean curvature equation \eref{young-laplace} which will be discussed later.)  When the pressure $p$ is loaded on $\Om$, it produces a displacement of the membrane.  The displacement at $\x=(x,y)\in\Om$ from its rest position is denoted by $w_p(\x)$, and the deformed two-dimensional surface can be expressed as
\begin{equation}\label{om_p}
\Om_p=\{(\x, w_p(\x))~:~\x\in\Om\}.
\end{equation}
Here, the boundary $\p\Om$ of the membrane is fixed so that there is no displacement on the boundary. Because the membrane undergoes deformation to reduce the area change caused by pressure $p\in \mathfrak{S}$, $w_p$ satisfies the prescribed mean curvature equation,
\begin{equation}
\label{young-laplace}
\left\{
\begin{array}{clc}
\nabla\cdot\left(\frac{1}{\sqrt{1+|\nabla w_p|^2}}\nabla w_p\right)&=p &\mbox{in } \Omega,\\
w_p&=0 &\textrm{ on } \partial\Omega .\end{array}\right.
\end{equation}
Problem \eref{young-laplace} has a unique solution for $p\in \mathfrak{S}$ with $\alpha$ being sufficiently small such that, for any measurable subset $E$ of $\Om$, $\int_E p\,d\x$ is smaller than the perimeter of $E$
 \cite{Bernstein1910,Finn1965,Giaquinta1974,Giusti1976,Giusti1978,Giusti1984}.

Let $H_{\diamond}^{-1/2}(\p\Om):=\{g\in H^{-1/2}(\p\Om) :\langle g, 1 \rangle =0\}$ with $\langle \, , \, \rangle$ denoting the duality pair between $H^{-1/2}(\partial \Omega)$ and $H^{1/2}(\partial \Omega)$. Let $H^1(\Omega)$ be defined by  $H^1(\Om) = \{ u \in
L^2(\Om): \nabla u \in L^2(\Omega)\}$ and let $H_0^{1}(\Om)$ be the set of functions in $H^1(\Om)$ with trace zero on $\partial \Omega$.

To extract EIT-data for pressure-sensing, we inject a current of $g\in H^{-1/2}_{\diamond}(\p\Om)$ into the membrane $\Om_p$. In the absence of the pressure ($p=0$), the induced potential due to the injection current of $g$ is the solution $u_0\in H^1(\Om)$ of the following Neumann problem
\begin{equation} \label{NeumannBVP}
\left\{
\begin{array}{ll}
~~~\Delta  u_0 =0\q \quad \mbox{ in }\Omega,\\
\nu\cdot\na u_0=g ~~~~\mbox{ on } \p\Om.
\end{array}
\right.
\end{equation}
In the presence of the pressure ($p\neq 0$), the induced potential $v_p$ is now defined on the deformed surface $\Om_p$, and is governed by
\begin{equation} \label{NeumannBVP-2}
\left\{
\begin{array}{ll}
\na_S\cdot\left( \frac{1}{\sqrt{1+|(\na w_p)\circ \pi_\x|^2}}\na_S v_p\right)=0\q \mbox{ on }\Omega_p,\\
~~~\nu_S\cdot\frac{1}{\sqrt{1+|(\na w_p)\circ \pi_\x|^2}}\na_S  v_p =g~~~~ \mbox{ on } \p\Om_p=\p\Om,
\end{array}
\right.
\end{equation}
where $\na_S$ is the surface gradient, $\nu_S$ is the outward unit normal vector to the boundary $\p\Om_p$, and $\pi_\x$ is the projection map $\Om_p\rightarrow \Om$ defined by $\pi_\x( x,y,z)=(x,y)$.

For the derivation of equation \eref{NeumannBVP-2}, we use  the concept of surface
conductivity \cite{McAllister1991}, while regarding the thin membrane as a two-dimensional surface, because the induced current density along the thin membrane can be viewed as a tangent vector field on the surface. If the deformed membrane is uniform in thickness, the resulting potential $v_p$ satisfies the surface Laplace equation, $\Delta_S v_p=0$, along the surface with $\Delta_S$ being the Laplace-Beltrami operator.  However, under the incompressibility assumption, the thickness of the membrane varies, and so does the surface conductivity.  As a small area, $\Delta x\Delta y$, is changed to $\sqrt{1+|\na w_p|^2}\Delta x\Delta y$, the thickness is approximately reduced by a factor of $\sqrt{1+|\na w_p|^2}$, as is the surface conductivity.

Define $\Upsilon_p : H^{-1/2}_{\diamond}(\p\Om)\rightarrow H^{1/2}_{\diamond}(\p\Om)$ by
\begin{equation}\label{NtD}
\Upsilon_p(g)=v_p|_{\p\Om_p}.
\end{equation}
 The pair $(g,\Upsilon_p(g))$ is called  current-to-voltage pair. Here, $H_{\diamond}^{1/2}(\p\Om):=\{g\in H^{1/2}(\p\Om) :\int_{\partial \Omega} g  =0\}$.

Apparently, the voltage difference, $ \Upsilon_p(g)-\Upsilon_0(g)$, reflects the information of the displacement. Therefore, it  is possible to recover $p$ from several pairs $(g^j, \Upsilon_p(g^j)),~ j=1,2,\cdots, N$. The inverse problem is to reconstruct $p$ and $w_p$ from the boundary voltage-to-current data $(g^j, \Upsilon_p(g^j)),~ j=1,2,\cdots, N$.

There are serious difficulties in solving the inverse problem, because the potential $v_p$ in (\ref{NeumannBVP-2}) is defined on the unknown surface $\Om_p$ in  three dimensions, and the measured data,  $(g^j, \Upsilon_p(g^j))$, are given on the boundary of the two-dimensional domain $\Om$. The relation between the surface, $\Om_p$, and the data,  $(g^j, \Upsilon_p(g^j))$, is too complicated to handle the inverse problem.
To deal with these difficulties, we introduce the following function defined in the known two-dimensional domain, $\Om$, as
\begin{equation}\label{proj}
 u_p(\x)= v_p (\x,w_p(\x))\q\mbox{for }~\x\in\Omega.
 \end{equation}
The following theorem provides a governing equation for $u_p$, through which the relationship between current and voltage can be understood.
\begin{theorem}
The function $u_p$ in (\ref{proj}) is dictated by the following elliptic equation
\begin{equation} \label{proj_con}
\left\{
\begin{array}{cl}
\nabla\cdot\left( \gamma_p\nabla u_p\right)=0 \quad \mbox{ in }~\Om, \\
(\gamma_p\na u_p)\cdot\mathbf{\nu}~=g\quad \mbox{ on }~\p\Om,
\end{array}
\right.
\end{equation}
where $\gamma_p$ is a symmetric positive definite matrix given by
\begin{align}\gamma_p=I -\frac{1}{1+|\nabla w_p|^2}\nabla w_p\nabla w_p^T~~\textrm{in }~\Om.
\label{gamma1}
\end{align}
\end{theorem}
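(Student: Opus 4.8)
\emph{Proof plan.} The plan is to recognize \eref{proj_con} as nothing but the Euclidean form of the intrinsic surface equation \eref{NeumannBVP-2} after pulling it back along the graph parametrization, and to carry out the identification at the level of weak formulations, where the boundary condition comes for free.

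First I would set up the parametrization $\Phi:\Om\to\Om_p$, $\Phi(\x)=(\x,w_p(\x))$, and compute its first fundamental form $G=D\Phi^{T}D\Phi=I+\na w_p\,\na w_p^{T}$. Since $G$ is the identity plus a rank-one positive semidefinite matrix, it is symmetric positive definite, with eigenvalue $1$ on $(\na w_p)^{\perp}$ and eigenvalue $1+|\na w_p|^{2}$ along $\na w_p$; hence $\det G=1+|\na w_p|^{2}$ and, by the Sherman--Morrison formula,
\[
G^{-1}=I-\frac{1}{1+\na w_p^{T}\na w_p}\,\na w_p\,\na w_p^{T}=\gamma_p .
\]
This already yields the symmetry and positive definiteness of $\gamma_p$ claimed in \eref{gamma1}: its eigenvalues are $1$ and $(1+|\na w_p|^{2})^{-1}$, both strictly positive.

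Next I would write the weak form of \eref{NeumannBVP-2}, namely $\int_{\Om_p}\sigma_S\,\na_S v_p\cdot\na_S\phi\,dS=\int_{\p\Om_p}g\,\phi\,d\ell_S$ for all admissible test functions $\phi$ on $\Om_p$, with $\sigma_S=(1+|(\na w_p)\circ\pi_\x|^{2})^{-1/2}$. Substituting $u_p=v_p\circ\Phi$, $\psi=\phi\circ\Phi$ and changing variables by $\Phi$, three standard facts enter: $(\na_S v_p\cdot\na_S\phi)\circ\Phi=\na u_p^{T}G^{-1}\na\psi=\na u_p^{T}\gamma_p\,\na\psi$ (the Riemannian inner product of gradients in coordinates); $dS=\sqrt{\det G}\,d\x=\sqrt{1+|\na w_p|^{2}}\,d\x$; and $\sigma_S\circ\Phi=(1+|\na w_p|^{2})^{-1/2}$. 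The crucial cancellation is $(\sigma_S\circ\Phi)\sqrt{\det G}=1$, so the bulk integral collapses to exactly $\int_{\Om}\gamma_p\na u_p\cdot\na\psi\,d\x$, with no residual zeroth- or first-order terms. For the boundary term I would use that $w_p=0$ on $\p\Om$, so $\Phi$ restricts to the identity there: $\p\Om_p$ coincides with $\p\Om$ as a curve with the same arc length, the injected flux $g$ is unchanged, and $\phi|_{\p\Om_p}$ pulls back to $\psi|_{\p\Om}$, whence $\int_{\p\Om_p}g\,\phi\,d\ell_S=\int_{\p\Om}g\,\psi\,d\ell$. Thus $\int_{\Om}\gamma_p\na u_p\cdot\na\psi\,d\x=\int_{\p\Om}g\,\psi\,d\ell$ for all test functions $\psi$ on $\Om$, which is precisely the weak formulation of \eref{proj_con}; by uniform ellipticity of $\gamma_p$ this characterizes $u_p$. (As a cross-check one can redo this in strong form: $\na_S\cdot(\sigma_S\na_S v)=\frac{1}{\sqrt{\det G}}\p_i\!\big(\sqrt{\det G}\,\sigma_S\,(G^{-1})_{ij}\p_j u\big)=\frac{1}{\sqrt{1+|\na w_p|^{2}}}\,\na\cdot(\gamma_p\na u)$, so the surface PDE vanishes iff $\na\cdot(\gamma_p\na u_p)=0$.)

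I expect the main obstacle to be the bookkeeping in the coordinate change for the surface divergence operator: verifying that the surface-conductivity weight $\sigma_S$ exactly cancels the metric Jacobian $\sqrt{\det G}$, so that the push-forward of the operator is the pure divergence-form operator $\na\cdot(\gamma_p\na\,\cdot\,)$ rather than that operator plus lower-order corrections, together with the clean treatment of the conormal boundary term (which is exactly where $w_p|_{\p\Om}=0$ is used, both to identify $\p\Om_p$ with $\p\Om$ and to equate the boundary fluxes). A secondary point is regularity: one needs $w_p$ smooth enough for $\gamma_p$ to be a genuine bounded, uniformly elliptic coefficient and for the change of variables to be legitimate; the interior regularity of solutions of the prescribed mean curvature equation \eref{young-laplace} together with the a priori gradient bound available under the smallness assumption $\|p\|_{L^{\infty}}<\alpha$ supply exactly this.
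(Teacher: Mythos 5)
Your proposal is correct, but it proves the theorem by a genuinely different route than the paper. The paper argues entirely in strong form with three-dimensional vector calculus: it extends $v_p$ trivially in the $z$-direction, writes the surface gradient as the projection of the 3D gradient onto the tangent plane (equation \eref{grad1}), and then converts the surface divergence via the curl identity \eref{surf_div}, arriving at $\na_S\cdot\bigl(\tfrac{\na_S v_p}{\sqrt{1+|\na w_p|^2}}\bigr)=\tfrac{1}{\sqrt{1+|\na w_p|^2}}\,\na\cdot(\gamma_p\na u_p)$. You instead pull the problem back through the graph parametrization, identify $\gamma_p=G^{-1}$ with $G=I+\na w_p\na w_p^T$ and $\det G=1+|\na w_p|^2$ (which also gives the eigenvalues $1$ and $(1+|\na w_p|^2)^{-1}$, exactly as the paper notes), and exploit the cancellation $\sigma_S\sqrt{\det G}=1$ at the level of the weak formulation; your strong-form cross-check $\tfrac{1}{\sqrt{\det G}}\p_i\bigl(\sqrt{\det G}\,\sigma_S (G^{-1})_{ij}\p_j u_p\bigr)$ is the intrinsic counterpart of the paper's curl computation. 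What your route buys is a cleaner treatment of the boundary data: the weak formulation automatically converts the surface flux condition on $\p\Om_p=\p\Om$ into $(\gamma_p\na u_p)\cdot\nu=g$, a point the paper's proof leaves implicit (it only derives the interior equation and positive definiteness), using $w_p|_{\p\Om}=0$ precisely where it is needed. What the paper's route buys is elementarity: it needs no Riemannian formalism or change-of-variables bookkeeping, only the projection formula and a standard vector identity. Your remarks on regularity of $w_p$ are a reasonable supplement but are not an issue the paper dwells on, since smoothness sufficient for the classical computation is taken for granted there.
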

\begin{proof} Let $v^{ext}_p$ denote the extension of $v_p$ such that $v^{ext}_p(\x,z)=v_p(\x,w_p(\x))$ for all $z\in \R$ and $\x\in\Om$. Then, the surface gradient $\na_Sv_p$ can be expressed as
\begin{align}
\na_S v_p =\na_3 v^{ext}_p-(\na_3 v^{ext}\cdot\n_S)\n_S\q\q\mbox{on }~ \Om_p\label{eq1},
\end{align}
where $\n_S=(\na w_p, -1)/\sqrt{1+|\na w_p|^2}$ is the unit downward normal vector to the surface $\Om_p$ and $\na_3=\left(\f{\p}{\p x }, \f{\p}{\p y },\f{\p}{\p z }\right)$ is the three-dimensional gradient.
Since $\p_z v^{ext}_p=0$, from \eref{eq1} a direct computation yields
\begin{align}\hspace{0cm}\label{grad1}
\na_S v_p
&=\frac{1}{1+|\na w_p|^2}\left(\begin{array}{cc}
1+(\p_y w_p)^2 & -(\p_x w_p)(\p_y w_p) \\
-(\p_x w_p)(\p_y w_p) & 1+(\p_x w_p)^2\\
\p_x w_p& \p_y w_p
\end{array}\right)
  \na  u_p\nonumber \\
 &=\left(\gamma_p\na u_p~,~ \frac{1}{1+|\na w_p|^2}\na w_p\cdot\na u_p\right)^T .
\end{align}
Here, we used the fact that $\na v_p^{ext}=\na u_p$.
The surface divergence of the tangential vector field $\frac{\na_S v_p}{\sqrt{1+|\na w_p|^2}}$ is written as
\begin{equation}\label{surf_div}
\na_S\cdot\left(\frac{\na_S v_p}{\sqrt{1+|\na w_p|^2}}\right)=\left(\na_3\times\left(\n_S\times\frac{\na_S v_p}{\sqrt{1+|\na w_p|^2}}\right)\right)\cdot\n_S.
\end{equation}
It follows from the vector identity, $(\na_3\times \A)\cdot \B=\na_3\cdot(\A\times\B)+(\na_3\times\B)\cdot\A$ that \eref{surf_div} can be expressed as
{\small
\begin{align*}
\na_S\cdot\left(\frac{\na_S v_p}{\sqrt{1+|\na w_p|^2}}\right)
&=\na_3\cdot\left(\left(\n_S\times\frac{\na_S v_p}{\sqrt{1+|\na w_p|^2}}\right)\times \n_S\right)+(\na_3\times\n_S)\cdot\left(\n_S\times\frac{\na_S v_p}{\sqrt{1+|\na w_p|^2}}\right)\\
&=\na_3\cdot\left(\frac{\na_S v_p}{\sqrt{1+|\na w_p|^2}}\right)+\f{\na_Sv_p}{\sqrt{1+|\na w_p|^2}}\cdot\left((\na_3\times\n_S)\times\n_S\right).
\end{align*}
}
Replacing $\na_S v_p$ with \eref{grad1}, we obtain
{\small
\begin{align*}
\na_S\cdot\left(\frac{\na_S v_p}{\sqrt{1+|\na w_p|^2}}\right)
&=\na\cdot\left(\frac{\gamma_p\na u_p}{\sqrt{1+|\na w_p|^2}}\right)-(\gamma_p \na u_p)\cdot\na\left(\frac{1}{\sqrt{1+|\na w_p|^2}}\right)\\
&=\frac{1}{\sqrt{1+|\na w_p|^2}}\na\cdot(\gamma_p\na u_p).
\end{align*}
}
Then, $\na_S\cdot\left(\frac{\na_Sv_p}{\sqrt{1+|\na w_p|^2}}\right)=0$ implies $\na\cdot\left(\gamma_p\na u_p\right)=0$, and $\gamma_p$ has the positive eigenvalues $1$ and $1/(1+|\na w_p|^2)$. This completes the proof.
\end{proof}

\subsection{Unique determination of the pressure support}

 We have seen that the displacement, $w_p$, and the current-voltage data, $(g,\Upsilon_p(g))$, are involved in (\ref{proj_con}) with the anisotropic coefficient, $\gamma_p$. In this subsection, we prove that the current-voltage data uniquely determine the pressure support.
To do so, we need to investigate the inverse problem of determining $\gamma_p$ from the current-voltage data. An anisotropic coefficient is uniquely determined by  the current-voltage data up to a diffeomorphism that fixes the boundary. For any diffeomorphism, $\Phi:\Om\rightarrow \Om$ with $\Phi|_{\p\Om}$, being the identity map,
 $u_p\circ\Phi^{-1}$ satisfies
\begin{equation}
\left\{\begin{array}{cc}
&\na\cdot(\gamma_p^{\Phi}\na u_p\circ\Phi^{-1})=0~\textrm{ in }~\Om,\\
& \gamma_p^{\Phi}\na u_p\circ\Phi^{-1}|_{\p\Om}=\gamma_p\na u_p|_{\p\Om},
\end{array}\right.
\end{equation}
 where
 $\gamma_p^{\Phi}$ is a $2\times 2$ symmetric matrix-valued function given by
\begin{equation}\label{diffeo}
\gamma_p^{\Phi}\circ\Phi(\x)=\frac{D\Phi(\x)\gamma_p(\x) D\Phi(\x)^T}{|\det( D\Phi(\x))|}~~~\textrm{ for } \x\in\Om,
\end{equation}
 where $D\Phi$ is the Jacobian of $\Phi$ and $\det$ denotes the determinant.
This means that  two different $\gamma_p$ and $\gamma_p^{\Phi}$ produce the same Neumann-to-Dirichlet map. Conversely, the Neumann-to-Dirichlet map can determine the tensor $\gamma$ up to the diffeomorphism, $\Phi:\Om\to \Om$, where $\Phi|_{\p\Om} =I$ provided that $\gamma$ is approximately constant \cite{Sylvester1990}. In our model,  $\gamma_p$ is only involved in  the scalar  $w_p$, and it is possible to determine $\gamma$ within the set $\Gamma_{\mathfrak{S}}:=\{ \gamma_p~:~ p\in \mathfrak{S}\}$ uniquely provided $\alpha$ is sufficiently small. Note that two different pressures $p$ and $-p$ produce the same coefficient $\gamma_{p}=\gamma_{-p}$.

We must consider the constrained inverse problem of recovering the anisotropic coefficient within the set $\Gamma_{\mathfrak{S}}$ from the current-voltage data.
Let us introduce the {\it outer support}  of $\mathfrak{S}$, denoted by ${\rm supp}_{\p\Om} (p)$;
 for $\x\notin {\rm supp}_{\p\Om} (p)$, there exists an open and connected set $U$ such that $\x\in U$, $\Om\setminus \Om_{d_0}\subset U$, and $p|_{U}=0$ \cite{Gebauer2008,Kusiak2003}.
\begin{theorem}\label{thm_unique}  For $p\in \mathfrak{S}$,   $\Lambda_{\gamma_{p}}$ determines  $\mbox{\rm supp}_{\p\Om}( p)$ uniquely.
\end{theorem}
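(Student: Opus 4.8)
The claim is that for $p_1,p_2\in\mathfrak{S}$ the equality $\Lambda_{\gamma_{p_1}}=\Lambda_{\gamma_{p_2}}$ forces ${\rm supp}_{\p\Om}(p_1)={\rm supp}_{\p\Om}(p_2)$; write $O_p:=\Om\setminus{\rm supp}_{\p\Om}(p)$ for the associated \emph{outer region}, the maximal connected open set containing the collar $\Om\setminus\Om_{d_0}$ on which $p$ vanishes. The guiding observation is that $\gamma_p$ encodes $\na w_p$ algebraically: it is symmetric with eigenvalue $1$ in the direction orthogonal to $\na w_p$ and eigenvalue $(1+|\na w_p|^2)^{-1}$ in the direction of $\na w_p$, so knowing $\gamma_p$ pointwise is the same as knowing $\na w_p$ up to sign, and, since $w_p|_{\p\Om}=0$, the same as knowing $w_p$ up to a single global sign. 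Because $\gamma_{-p}=\gamma_p$ this sign is immaterial for the support, so it suffices to recover $w_p$ (equivalently $\gamma_p$), up to sign, on the outer region.

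First, since $\|p_j\|_{L^\infty(\Om)}<\alpha$ with $\alpha$ small, $w_{p_j}$ and therefore $\gamma_{p_j}$ are small perturbations of the identity, so the anisotropic Calderón uniqueness result (recovery up to a boundary-fixing diffeomorphism) applies \cite{Sylvester1990}: there is a diffeomorphism $\Phi:\Om\to\Om$ with $\Phi|_{\p\Om}=I$ and $\gamma_{p_2}=\gamma_{p_1}^{\Phi}$ in the sense of \eqref{diffeo}. Next, boundary determination recovers $\gamma_{p_j}$ on $\p\Om$, hence through the eigenstructure above and $w_{p_j}|_{\p\Om}=0$ the Cauchy data of $w_{p_1}$ and---after replacing $p_2$ by $-p_2$ if necessary---of $w_{p_2}$ on $\p\Om$; these then agree. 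Then, on the outer region $p_j$ vanishes, so $w_{p_j}$ solves the minimal surface equation $\na\cdot\big(\na w/\sqrt{1+|\na w|^2}\big)=0$ there; letting $W$ be the connected component of $O_{p_1}\cap O_{p_2}$ that contains the collar, the difference $w_{p_1}-w_{p_2}$ satisfies a linear elliptic equation on $W$ with vanishing Cauchy data on $\p\Om$, so unique continuation gives $w_{p_1}=w_{p_2}$, hence $\gamma_{p_1}=\gamma_{p_2}$, throughout $W$. Finally one must upgrade this to $W=O_{p_1}=O_{p_2}$: if, say, $W\subsetneq O_{p_1}$, a point of $\p W\cap\Om$ lies in ${\rm supp}_{\p\Om}(p_2)$ yet in the $p_1$-free open set $O_{p_1}$, and this is excluded by combining a rigidity argument for the gauge near $\p\Om$ (for $\gamma$ close to $I$, $\gamma^{\Phi}=\gamma$ with $\Phi|_{\p\Om}=I$ forces $\Phi\equiv I$ there, the model case $\gamma=I$ being the triviality of a conformal self-map of $\Om$ fixing a boundary arc) with the unique determination of $\gamma$ inside $\Gamma_{\mathfrak{S}}$ for small $\alpha$. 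Hence ${\rm supp}_{\p\Om}(p_1)={\rm supp}_{\p\Om}(p_2)$.

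The hard part is this last step, together with the two structural features that make it delicate. The diffeomorphism gauge of anisotropic EIT is genuine, so $\gamma_p$ cannot be recovered outright; one must show the gauge is trivial on the outer region, and this is precisely where $w_p|_{\p\Om}=0$ and the unique continuation of solutions of the minimal surface equation are essential. More importantly, $p\mapsto w_p$ is nonlocal---the clamped membrane sags everywhere it is not pinned---so $w_p\not\equiv0$ and $\gamma_p\neq I$ on essentially all of $\Om$, not merely on ${\rm supp}(p)$; thus $I-\gamma_p$ is not supported near ${\rm supp}(p)$ and the standard factorization or monotonicity machinery for detecting inclusions cannot be invoked directly. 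The outer support must instead be singled out by the unique-continuation argument above. An alternative to the reduction via \cite{Sylvester1990} is a Runge-approximation (``localized potentials'') argument run not against the background $I$ but against the common minimal-surface solution on $O_{p_1}\cap O_{p_2}$, exploiting that $w_{p_1}$ and $w_{p_2}$ must coincide there.
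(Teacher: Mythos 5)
The first half of your argument is essentially the paper's: boundary determination of $\gamma_{p}$ from $\Lambda_{\gamma_p}$ gives the Cauchy data of $w_{p_1}$ and $w_{p_2}$ on $\p\Om$ up to the sign ambiguity $p\mapsto -p$, the difference $w_{p_1}-c\,w_{p_2}$ ($|c|=1$) solves a linear elliptic equation with an interpolated positive-definite coefficient, and unique continuation yields $w_{p_1}=c\,w_{p_2}$ on the connected outer component of $\Om\setminus(\mbox{supp}_{\p\Om}(p_1)\cup\mbox{supp}_{\p\Om}(p_2))$. Up to this point your proposal is sound and parallels the paper (the paper does not even need the full uniqueness-up-to-diffeomorphism reduction via \cite{Sylvester1990} that you invoke; it only uses boundary determination).

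The genuine gap is in your final step, i.e. upgrading $w_{p_1}=c\,w_{p_2}$ on the common outer region to equality of the outer supports. Your proposed closure --- ``gauge rigidity near $\p\Om$'' combined with ``the unique determination of $\gamma$ inside $\Gamma_{\mathfrak{S}}$ for small $\alpha$'' --- does not work as stated. The determination of $\gamma$ within $\Gamma_{\mathfrak{S}}$ is exactly the kind of statement the theorem is meant to establish (the paper only asserts it informally, without proof), so invoking it here is circular; and the rigidity claim ``$\gamma^{\Phi}=\gamma$ with $\Phi|_{\p\Om}=I$ forces $\Phi\equiv I$ near $\p\Om$'' is neither proved beyond the model case $\gamma=I$ nor clearly applicable, since what you have from the reduction is $\gamma_{p_2}=\gamma_{p_1}^{\Phi}$ with $\gamma_{p_1}$ and $\gamma_{p_2}$ a priori different, not a self-gauge identity for a single $\gamma$. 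Moreover, the observation that a point of $\p W\cap\Om$ would lie in $\mbox{supp}_{\p\Om}(p_2)$ while being in the $p_1$-free set is not by itself contradictory --- nothing prevents $p_1$ from vanishing on part of $\mbox{supp}_{\p\Om}(p_2)$; the whole content of the step is to show that such a discrepancy is visible in the data. The paper closes this step by the Runge-approximation (localized potentials) route of Druskin \cite{Druskin1982} and Isakov \cite{Isakov1988}: assuming $D_1\setminus D_2\neq\emptyset$, it derives from $\Lambda_{\gamma_{p_1}}=\Lambda_{\gamma_{p_2}}$ and the outer-region identity the orthogonality relation
\begin{equation*}
0=\int_{D_1\cup D_2}(\gamma_{p_1}-\gamma_{p_2})\na u_{p_1}\cdot\na u_{p_2}\,d\x
\end{equation*}
for all pairs of solutions, and then chooses sequences of solutions whose gradients concentrate so as to make this integral blow up, a contradiction. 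You mention precisely this Runge-type alternative in your last sentence but do not develop it; as written, your main argument for the decisive step is a gap, and the sketched alternative is the paper's actual proof.
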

\begin{proof} Let $p_1,p_2\in \mathfrak{S}$. We assume  $\Lambda_{\gamma_{p_1}}=\Lambda_{\gamma_{p_2}}$. We need to prove that $\mbox{\rm supp}_{\p\Om}( p_1)=\mbox{\rm supp}_{\p\Om}( p_2)$.  From $\Lambda_{\gamma_{p_1}}=\Lambda_{\gamma_{p_2}}$, it follows that $\gamma_{p_1}=\gamma_{p_2}$ on the boundary $\p\Om$  \cite{Sylvester1990}. From \eref{gamma1}, we have
$$
\frac{1}{1+|\nabla w_{p_1}|^2}\nabla w_{p_1}\nabla w_{p_1}^T=\frac{1}{1+|\nabla w_{p_2}|^2}\nabla w_{p_2}\nabla w_{p_2}^T\q\mbox{on }~\p\Om.
$$
This leads to the following identity with $c$ (real) being $|c|=1$:
$$
\nabla w_{p_1}=c \nabla w_{p_2} ~ \q\mbox{on } \p\Om.$$
The difference $w_{p_1}-c w_{p_2}$ satisfies
\begin{align}
\na \cdot( A \na (w_{p_1}-c w_{p_2}))&=p_1-c p_2~~\mbox{in }~\Om , \label{elliptic_A}\\
 w_{p_1}-c w_{p_2}&=0 \q\q\q~~\mbox{on }~\p\Om ,\\
\nabla w_{p_1}-c \nabla w_{p_2}&=0 \q\q\q~~\mbox{on }~\p\Om ,
\end{align}
where $A$ is a matrix given by
\begin{equation}\label{A}
A(\x)=\int_0^1 \f{1}{\sqrt{1+\left|W_t(\x)\right|^2}}\left[ I-\f{W_t(\x) W_t(\x)^T}{
1+\left|W_t(\x)\right|^2} \right] dt\q\mbox{for}~\x\in\Om ,
\end{equation}
and $W_t=t\na w_{p_1}+(1-t)c\na w_{p_2}$. Since the structure of $A$ is the same as $\gamma_p$ in \eref{gamma1}, $A$ is positive-definite and $w_{p_1}-c w_{p_2}$ satisfies the elliptic PDE \eref{elliptic_A}. Hence, by the unique continuation property, it follows that
\begin{equation}\label{bound1}
w_{p_1}(\x)=c w_{p_2}(\x)\q\mbox{for }~ \x\in \Om\setminus \mbox{\rm supp}_{\p\Om}( p_1-c p_2).
\end{equation}

 It remains to prove that $\mbox{supp}_{\p\Om}(p_1)=\mbox{supp}_{\p\Om}(p_2)$. We use Runge approximation argument given by Druskin \cite{Druskin1982} and Isakov \cite{Isakov1988}. For notational simplicity, we denote $D_j:=\mbox{\rm supp}_{\p\Om}( p_j)$ for $j=1,2$. To derive a contradiction, we assume that  $D_1\setminus D_2\neq \emptyset$. Noting that
$$\na\cdot(\gamma_{p_2}\na (u_{p_2}-u_{p_1}))=\na\cdot((\gamma_{p_1}-\gamma_{p_2})\na u_{p_1})~~\mbox{in}~~\Om,$$
if $u_{p_1}=u_{p_2}$ on $\p\Om$, it follows from the assumption $\Lambda_{\gamma_{p_1}}=\Lambda_{\gamma_{p_2}}$ and \eref{bound1} that
$$\int_{\Om}\gamma_{p_2}\na (u_{p_2}-u_{p_1})\cdot\na \varphi=\int_{D_1\cup D_2}(\gamma_{p_1}-\gamma_{p_2})\na u_{p_1}\cdot \na \varphi~~\mbox{for all  }\varphi\in H^1(\Om).$$
This leads to
\begin{equation}\label{runge}
0=\int_{D_1\cup D_2}(\gamma_{p_1}-\gamma_{p_2})\na u_{p_1}\cdot \na u_{p_2}
\end{equation}
for all solutions $u_{p_j}$ to $\na\cdot(\gamma_{p_j}\na u_{p_j})=0$ in $\Om$.
According to the Runge type approximation theorem \cite{Druskin1982,Isakov1988}, we can choose sequences of solutions $u_{p_j}^n$ satisfying $\na\cdot(\gamma_{p_j}\na u_{p_j}^n)=0$ such that
$$
\lim_{n\to\infty}\int_{\Om} (\gamma_{p_1}-\gamma_{p_2}) \na u_{p_1}^n\cdot \na u_{p_2}^n d\x=\infty,
$$
which contradicts  \eref{runge}. This completes the proof.
\end{proof}

\subsection{Unique determination of the pressure in the monotone case}

We now prove the unique determination of the pressure from the current-voltage data in the monotone case.
\begin{theorem}
Let $p_1,p_2$ be in $\mathfrak{S}$. If $p_1 \leq p_2$ in $\Om$ and $\Upsilon_{p_1}=\Upsilon_{p_2}$, then either $p_1=p_2$ or $p_1=-p_2$ in $\Om$.
\end{theorem}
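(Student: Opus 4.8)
The plan is to transport the problem to the planar equation \eqref{proj_con} and then split into two cases governed by a sign ambiguity, the monotonicity hypothesis being precisely what tells them apart. By the change of variables \eqref{proj}, $\Upsilon_{p_1}=\Upsilon_{p_2}$ is equivalent to $\Lambda_{\gamma_{p_1}}=\Lambda_{\gamma_{p_2}}$ for \eqref{proj_con}, so boundary determination of the conductivity \cite{Sylvester1990} gives $\gamma_{p_1}=\gamma_{p_2}$ on $\p\Om$, and the rigid form \eqref{gamma1} of $\gamma_p$ forces $\na w_{p_1}=c\,\na w_{p_2}$ on $\p\Om$ for a constant $c$ with $|c|=1$. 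The main device is the linearization already used in \eqref{elliptic_A}--\eqref{A}: for any pressures $q_1,q_2$ and any $c$ with $|c|=1$, the function $w_{q_1}-c\,w_{q_2}$ vanishes on $\p\Om$ and solves $\na\cdot(A\,\na(w_{q_1}-c\,w_{q_2}))=q_1-c\,q_2$ in $\Om$ with $A$ symmetric positive definite. In the case $c=1$, the function $\phi:=w_{p_1}-w_{p_2}$ has vanishing Cauchy data on $\p\Om$, so integrating $\na\cdot(A\na\phi)=p_1-p_2$ over $\Om$ and using the divergence theorem yields $\int_\Om(p_1-p_2)\,d\x=0$; since $p_1\le p_2$ a.e.\ in $\Om$, this gives $p_1=p_2$ a.e.

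In the case $c=-1$ more is needed. Since the operator in \eqref{young-laplace} is odd, $w_{-p_2}=-w_{p_2}$ and hence $\gamma_{-p_2}=\gamma_{p_2}$, so $\Upsilon_{-p_2}=\Upsilon_{p_1}$ and $\na w_{p_1}=\na w_{-p_2}$ on $\p\Om$; then $\psi:=w_{p_1}+w_{p_2}=w_{p_1}-w_{-p_2}$ has vanishing Cauchy data and solves $\na\cdot(\tilde A\na\psi)=p_1+p_2$, and integration gives only $\int_\Om(p_1+p_2)\,d\x=0$, which is not conclusive because $p_1+p_2$ has no definite sign. To finish I would use that, for $\alpha$ small, the conductivity is uniquely determined within $\Gamma_{\mathfrak{S}}$ (as noted before Theorem~\ref{thm_unique}), so $\gamma_{p_1}=\gamma_{p_2}$ in all of $\Om$; taking the trace of \eqref{gamma1} gives $|\na w_{p_1}|=|\na w_{p_2}|$ and then $\na w_{p_1}=\pm\na w_{p_2}$ pointwise in $\Om$. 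Writing $\phi:=w_{p_1}-w_{p_2}$, it follows that $\na\phi\cdot\na\psi\equiv0$ and $|\na\phi|\,|\na\psi|\equiv0$, so the open sets $\{\na\phi\ne0\}$ and $\{\na\psi\ne0\}$ are disjoint. On each component of $\{\na\phi\ne0\}$ one has $\na\psi\equiv0$, so $w_{p_1}$ and $w_{p_2}$ coincide, up to one and the same additive constant, with $\frac{1}{2}\phi$ and $-\frac{1}{2}\phi$; by oddness and translation invariance of \eqref{young-laplace}, $p_1=-p_2$ there and hence on all of $\{\na\phi\ne0\}$. Symmetrically $p_1=p_2$ on $\{\na\psi\ne0\}$, while on $\{\na w_{p_1}=\na w_{p_2}=0\}$, where both $w_{p_1}$ and $w_{p_2}$ are locally constant, $p_1=p_2=0$ a.e.

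It remains to upgrade this pointwise alternative to a global one. By the comparison principle for \eqref{young-laplace} ($q_1\le q_2\Rightarrow w_{q_1}\ge w_{q_2}$), $\phi=w_{p_1}-w_{p_2}\ge0$ in $\Om$ with $\phi=0$ on $\p\Om$; since $\phi$ is a nonnegative supersolution of the uniformly elliptic equation $\na\cdot(A\na\phi)=p_1-p_2\le0$, the strong maximum principle gives either $\phi\equiv0$, i.e.\ $p_1=p_2$, or $\phi>0$ throughout $\Om$. In the latter case no component of $\{\na\psi\ne0\}$ can reach $\p\Om$ (where $\phi=0$), so $\psi\equiv0$ in a neighborhood of $\p\Om$; combining this with the component description above and the comparison and maximum principles for \eqref{young-laplace} applied across the critical set $\{\na w_{p_1}=\na w_{p_2}=0\}$ (on which $p_1=p_2=0$), one rules out a mixed configuration and obtains $\psi\equiv0$ in $\Om$, that is $p_1=-p_2$.

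The step I expect to be the main obstacle is this last one in the case $c=-1$: once $p_1+p_2$ loses its sign, no purely linear argument suffices, and one has to combine the interior rigidity of $\gamma_p$ with the comparison and strong maximum principles for the nonlinear operator in \eqref{young-laplace} to exclude a configuration with $p_1=p_2$ on one part of $\Om$ and $p_1=-p_2$ on another. A subsidiary technical point is the passage from $\gamma_{p_1}=\gamma_{p_2}$ on $\p\Om$ to $\gamma_{p_1}=\gamma_{p_2}$ in the interior of $\Om$, which relies on $\alpha$ being small and on $\gamma_p$ depending only on the scalar $w_p$.
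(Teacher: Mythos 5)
Your case $c=1$ is fine, and in fact cleaner than the paper's treatment of that alternative: with vanishing Cauchy data for $w_{p_1}-w_{p_2}$, integrating \eqref{elliptic_A} and using $p_1\le p_2$ immediately gives $p_1=p_2$. The genuine gap is in the case $c=-1$, and it is twofold. First, the step ``the conductivity is uniquely determined within $\Gamma_{\mathfrak{S}}$, so $\gamma_{p_1}=\gamma_{p_2}$ in all of $\Om$'' is not an available lemma: in the paper this is only an informal remark (resting on \cite{Sylvester1990}, which requires near-constant coefficients and determines $\gamma$ only up to a boundary-fixing diffeomorphism), and it is essentially a statement at least as strong as the uniqueness you are asked to prove, so invoking it is close to circular. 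Second, even granting interior equality of $\gamma_{p_1}$ and $\gamma_{p_2}$, your concluding step --- excluding a ``mixed configuration'' in which $\na w_{p_1}=\na w_{p_2}$ on part of $\Om$ and $\na w_{p_1}=-\na w_{p_2}$ elsewhere, separated by the critical set --- is only asserted, with a vague appeal to comparison and maximum principles ``across the critical set''; you yourself flag it as the main obstacle. Nothing in your $c=-1$ analysis actually exploits the hypothesis $p_1\le p_2$ in a way that closes the argument, and the radial example \eqref{examp_w} shows that gluing phenomena of exactly this flavor are delicate.

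The paper's proof uses the monotonicity much more directly and needs no interior identification of $\gamma_p$: assuming $p_1\neq p_2$, the strong comparison principle for the uniformly elliptic equation \eqref{elliptic_A}--\eqref{A} gives $w_{p_1}>w_{p_2}$ in $\Om$, and Hopf's lemma then gives the \emph{strict} inequality $\p_\nu w_{p_1}<\p_\nu w_{p_2}$ at every boundary point. Since $w_{p_j}=0$ on $\p\Om$ forces $\na w_{p_j}=(\p_\nu w_{p_j})\nu$ there, while boundary determination ($\Lambda_{\gamma_{p_1}}=\Lambda_{\gamma_{p_2}}$ implies $\gamma_{p_1}=\gamma_{p_2}$ on $\p\Om$ \cite{Kohn-Vogelius}) forces $|\na w_{p_1}|=|\na w_{p_2}|$ on $\p\Om$, the only consistent configuration is the pure sign flip of the normal derivatives, which is what yields the alternative $p_1=-p_2$. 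So the missing idea in your proposal is precisely this pairing of the strong comparison principle with Hopf's lemma, which converts the ordering $p_1\le p_2$ into strict boundary information and makes the $c=-1$ case tractable from boundary data alone; I would rework the second half of your argument along those lines rather than trying to patch the interior-determination route.
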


\begin{proof}
To derive a contradiction, we assume that $p_1\neq p_2$ and use exactly the same argument as in the proof of Theorem \ref{thm_unique}. Remember that $w_{p_1} - w_{p_2}$ satisfies the elliptic PDE
$$
\na \cdot( A \na (w_{p_1}- w_{p_2}))=p_1- p_2\q\q \mbox{in }~\Om
$$
with $A$ being defined by (\ref{A}).
From the strong comparison principle, it follows that $$w_{p_1}> w_{p_2}~~\textrm{in}~~\Om.$$
Since $w_{p_j}=0$ on $\p\Om$  for $j=1,2$, we have from Hopf's lemma
   $$\p_\n w_{p_1}< \p_\n w_{p_2}\q\mbox{on }\p\Om.$$
Noting that  $\na w_{p_j}=(\p_\n w_{p_j})\n $ on $\p\Om$, we have
$$
\mbox{either }\q~~|\na w_{p_1}|\neq |\na w_{p_2}|~ \mbox{on }~\p\Om\q~~\mbox{or}\q~~ p_1=-p_2~\mbox{in}~\Om.
$$
Hence, if $p_1\neq -p_2$ in $\Om$, then we have
$$
\gamma_{p_1}=I-\frac{1}{1+|\na w_{p_1}|^2}\na w_{p_1}\na w_{p_1}^T\neq I-\frac{1}{1+|\na w_{p_2}|^2}\na w_{p_2}\na w_{p_2}^T= \gamma_{p_2}~ \mbox{on }~\p\Om. $$
However, this is not possible because  $\Lambda_{\gamma_{p_1}}=\Lambda_{\gamma_{p_2}}$ implies $\gamma_{p_1}|_{\p\Om}=\gamma_{p_2}|_{\p\Om}$ \cite{Kohn-Vogelius}.  This concludes that if $p_1\neq p_2$, then $p_1= -p_2$ in $\Om$, which completes the proof.	 
\end{proof}

It is worth emphasizing  that two different pressures $p_1$ and $p_2$ having the same support can produce the same  displacement near the boundary. More precisely, there exist two different pressures $p_1$ and $p_2$ such that
$$\mbox{\rm supp}_{\p\Om}( p_1)=\mbox{\rm supp}_{\p\Om}( p_2),$$
and
$$w_{p_1}=w_{p_2}~~\textrm{in}~~ \Om\setminus(\mbox{supp}_{\p\Om}(p_1)\cup\mbox{supp}_{\p\Om}(p_2)) .$$

Let $\Om=B_5$ and $D=B_2$ with $B_r$ being the ball of radius $r$ centered at the origin. Consider the following radial symmetric function
\begin{equation}\label{examp_w}
\hspace{0cm}w_{\rho}(\x)=\left\{\begin{array}{cc}
\rho |\x|^3+(-3\rho+\psi'(2)/4) |\x|^2+(-50\rho-25/4\psi'(2))~~&\mbox{if } \x\in D,\\
\psi(|\x|)-\psi(5)~~& \mbox{if }\x\in \Om\setminus D,\\
\end{array}\right.
\end{equation}
where  $\psi(r)=\sqrt{2}\log\left(r+\sqrt{r^2-0.5}\right)$ and $\rho\in\mathbb{R}$.
 A direct computation shows that $w_{\rho}$ satisfies
$$\nabla\cdot\left(\frac{1}{\sqrt{1+|\nabla w_{\rho}|^2}}\nabla w_{\rho}\right)=p_{\rho}$$
where  $p_{\rho}$ is
\begin{equation*}
p_{\rho}(\x)=\left\{\begin{array}{cc}
\f{\p_r^2w_\rho(\x)+ |\x|^{-1}\p_rw_\rho(\x)+|\x|^{-1}w_\rho^3(\x)}{ (1+[\p_r w_\rho(\x)]^2)^{3/2}} ~~&\mbox{if } \x\in D,\\
0~~&\mbox{if }\x\in \Om\setminus D,\\
\end{array}\right.
\end{equation*}
and $\p_r$ is the radial derivative. Hence, $w_{\rho}|_{\Om\setminus D}$ does not change with $\rho$: for every $\rho_1, \rho_2\in \R$,  we have
$$ w_{\rho_1}=w_{\rho_2}~~\textrm{for}~~\x\in\Om\setminus D.$$
This means that in the non-monotone case there are infinitely many $p_{\rho}$ which provide the same displacement near the boundary.

\section{Reconstruction method}\label{sec:recon_method}

\subsection{Measured data: Discrete Neumann-to-Dirichlet map}

We use $N$-channel EIT system in which $N$ electrodes $\{\mathcal{E}_1,\mathcal{E}_2,\ldots,, \mathcal{E}_N\}$ are attached on the boundary $\p\Om$. Let $u_p^j$ be the potential in (\ref{proj_con}) with $g=g^j$ which corresponds to the $j$th injection current using the adjacent pair $\mathcal{E}_j$ and $\mathcal{E}_{j+1}$.
When we inject a current of $I_0$[mA] along the adjacent electrodes $\mathcal{E}_j$ and $\mathcal{E}_{j+1}$, the resulting potential $u_p^j$ satisfies
\begin{equation}\label{Mp-PDE}
\left\{\begin{array}{lc}
\na\cdot(\gamma_p\na u_p^j)=0~~\textrm{in}~\Om,\\
\int_{\mathcal{E}_j}(\gamma_p\na u_p^j)\cdot\nu ds=I_0=-\int_{\mathcal{E}_{j+1}}(\gamma_p\na u_p^j)\cdot\nu ds,\\
(\gamma_p\na u^j_p)\cdot\nu=0~~\textrm{on}~\p\Om\setminus\mathcal{E}_j\cup\mathcal{E}_{j+1} ,\\
\na u_p^j\times \nu =0~~\textrm{on}~\mathcal{E}_j\cup\mathcal{E}_{j+1} .
\end{array}
\right.
\end{equation}
 The $i$th boundary voltage subject to the $j$th injection current is denoted as
\begin{align}\label{mea_data}
V^{i,j}_p= I_0(u_p^j|_{\mathcal{E}_i}-u_p^j|_{\mathcal{E}_{i+1}}) ~~\textrm{for  }i,j=1,2,\ldots,N.
\end{align}
 Here, $\mathcal{E}_{N+1}=\mathcal{E}_1$.
Integration by parts gives the  reciprocity principle:
\begin{equation}\label{recipro}
V^{i,j}_p=\int_{\Om}(\gamma_p\na u_p^i)\cdot\na u_p^jd\x=V^{j,i}_p.
\end{equation}
The boundary voltage (\ref{mea_data}) is assumed to be known. We use it as measurement data for recovery of pressure $p$.

\subsection{Discrepancy minimization problems}

Let $\mathcal{V}^{i,j}$ be the measured data under an applied pressure $p$. Then it follows from (\ref{recipro}) that the pressure $p$ can be obtained by minimizing the discrepancy  functional
\begin{equation}\label{functional}
\mathcal{J}(p)=\sum^N_{i,j=1}\left|\int_{\Om}\left(\gamma_p\na u_p^i\right)\cdot\na u_p^jd\x-\mathcal{V}^{i,j}\right|^2.
\end{equation}
  The inverse problem can be viewed as finding the minimizer of $\mathcal{J}(p)$. Unfortunately, it is numerically difficult to compute the minimizer of $\mathcal{J}(p)$ because  $\mathcal{J}(p)$ is highly non-linear with respect to $p$.

To extract necessary information about $p$ from the data $\mathcal{V}^{i,j}$, we use the voltage difference data
\begin{equation}\label{measure_data}
\mathcal{W}^{i,j}:=\mathcal{V}^{i,j}-u_0^{i,j},
\end{equation}
 where $u_0^{i,j}$ is the data in \eref{mea_data} with $p=0$, {\it i.e.,} the boundary voltage data in the absence of the pressure. With the voltage difference data $\mathcal{W}^{i,j}$, the functional $\mathcal{J}(p)$ in \eref{functional} can be rewritten as
\begin{equation}\label{func_J}
\mathcal{J}(p)=\sum^{N}_{i,j=1}\left|\int_\Om\left( \frac{\na w_p\na w_p^T}{1+|\na w_p|^2}\na u_p^i\right)\cdot \na u_0^jd\x-\mathcal{W}^{i,j}\right|^2.
\end{equation}
The above identity follows from
\begin{align*}
\hspace{0cm}\int_\Om (\gamma_p\na u_p^i)\cdot\na u_p^jd\x-\mathcal{V}^{i,j}&=\int_\Om \na u_p^i\cdot\na u_0^jd\x-\mathcal{V}^{i,j}\\
&=\int_\Om \na u_p^i\cdot\na u_0^jd\x -\int_\Om \na u_0^i\cdot \na u_0^jd\x-\mathcal{W}^{i,j}\\
&=\int_\Om \na u_p^i\cdot\na u_0^jd\x -\int_\Om \left(\gamma_p\na u_p^i\right)\cdot \na u_0^jd\x-\mathcal{W}^{i,j}\\
&= \int_\Om \left((I-\gamma_p)\na u_p^i\right)\cdot \na u_0^jd\x-\mathcal{W}^{i,j}.
\end{align*}

Due to the high non-linearity of the discrepancy functional $\mathcal{J}$ in \eref{func_J}, it is difficult to compute a minimizer $p$ directly. To compute minimizers of \eref{func_J} effectively, we will make use of various approximations. Assume that $|\na w_p|$ is small. We will neglect quantities of fourth order of smallness; for example,
\begin{equation}\label{Wp-approx1}
\frac{\na w_p\na w_p^T}{1+|\na w_p|^2}=\na w_p\na w_p^T +O(|\na w_p|^4)\approx \na w_p\na w_p^T.
\end{equation}
Since
$
\int_\Om \gamma_p \na (u_p-u_0)\cdot\na (u_p-u_0) d\x=\int_\Om (\gamma_p-I) \na u_0\cdot \na (u_p-u_0)d\x,
$
we have
\begin{equation}\label{Wp-approx2}
\|\na (u_p-u_0)\|_{L^2(\Om)}\le C\|\na w_p\|_{L^\infty(\Om)}^2 \|\na u_0\|_{L^2(\Om)}.
\end{equation}
From \eref{Wp-approx1} and \eref{Wp-approx2}, we have
 \begin{equation}\label{ap1}
\hskip -0.2in\int_{\Om}[I-\gamma_p]\nabla u_p\cdot\nabla u_0d\x=\int_{\Om}[\na w_p\na w_p^T]\nabla u_0\cdot\nabla u_0d\x+ O\left(\|\na w_p\|_{L^\infty(\Om)}^6\right).
 \end{equation}
 Neglecting $O\left(\|\na w_p\|_{L^\infty(\Om)}^6\right)$ in \eref{ap1}, the discrepancy functional $\mathcal{J}$ in \eref{func_J} can be approximated as
\begin{equation}\label{lin_J}
\mathcal{J}_1(p)=\sum^{N}_{i,j=1}\left|\int_\Om [\na w_p\na w_p^T]\na u_0^i\cdot \na u_0^jd\x-\mathcal{W}^{i,j}\right|^2.
\end{equation}
Assuming Gaussian measurement noise, we consider the following regularized minimization problem:
\begin{align}\label{lin_regul}
\min_{p} \mathcal{J}^{reg}_1(p)
\end{align}
with
\begin{align}\label{lin_regul2}
\mathcal{J}^{reg}_1(p)=\sum_{i,j=1}^N\left|\int_{\Om} [\na w_p\na w_p^T]\na u_0^i\cdot\na u_0^jd\x-\mathcal{W}^{i,j}\right|^2+\beta\|p\|_{L^2(\Om)}^2,
\end{align}
and $\beta$ being a regularization parameter.

The displacement $w_p$ can be approximated to $v\in H^1_0(\Om)$ with $v$ being the solution of  Possion's equation $\Delta v=p$ in $\Om$,  because $$\int_\Om |\na (w_p-v)|^2 d\x=\int_\Om \left(1-\f{1}{\sqrt{1+|\na w_p|^2}}\right) \na w_p\cdot \na (w_p-v)d\x=O(\|\na w_p\|_{L^\infty(\Om)}^6).$$ With this approximation, the minimization problem (\ref{lin_regul}) can be further simplified as follows: find $(p,v)\in[H^1_0(\Om)]^2$ which minimizes the discrepancy functional
\begin{align}\label{F_prop}
 \mathcal{J}_2(p,v)=\sum_{i,j=1}^N&\left|\int_{\Om}\left([\na v\na v^T]\na u_0^i\right)\cdot\na u_0^jd\x-\mathcal{W}^{i,j}\right|^2\\&+\lambda\int_\Om \left(\frac{1}{2}|\na v|^2+pv\right)d\x +\beta\|p\|_{L^2(\Om)}^2,\nonumber
\end{align}
where $\lambda$ is a positive number. In the next subsection, we minimize the functional defined in \eref{F_prop}  in order to reconstruct $p$.
\subsection{Reconstruction algorithm}\label{subsec:recon_algo}
Based on the simplified discrepancy functional \eref{F_prop}, we propose a pressure image reconstruction algorithm.  We discretize the domain $\Om$ into triangular elements such that $\overline \Omega=\cup_{k=1}^K T_k$, where $T_k$ is a triangular subregion with side length $h<1$. For the approximation of the pressure $p$,
we assume that $p$  is a piecewise constant function contained in the set
$$
\mathcal P_h:=\{ p~:~ p \mbox{ is constant for each } T_k, ~k=1,\cdots, K\}.
$$
We assume that $p\in \mathcal P_h^*:=\mathcal P_h\cap \{ p~:~p=0 \mbox{~in ~} \Om\setminus \Om_{d_0}\}$.
Then, we can express the pressure $p$  by $$p(\x)=\sum_{k=1}^Kp^{(k)}\chi_{T_k}(\x).$$

For each $k=1,\cdots K$, let $v_k$ be  the solution of
\begin{equation}\label{v}
\left\{\begin{array}{rcc}
-\Delta v_k&=\chi_{T_k}~\textrm{ in }\Om,\\
v_k&=0~\textrm{ on } \partial\Om.
\end{array}
\right.
\end{equation}
Then, $v_k$ can be expressed as
\begin{equation}\label{Green-rep}
v_k(\x)=\int_{T_k} G(\x,\y) d\y,
\end{equation}
where $G(\x,\y)$ is the Dirichlet function associated with the domain $\Om$, that is, the solution to
$$
\left\{\begin{array}{rcc}
-\Delta_x G &=\delta_y ~\textrm{ in }\Om,\\
G &=0~\textrm{ on } \partial\Om
\end{array}
\right.
$$
with $\delta_y$ being the Dirac mass at $y$.

If $(p,v)\in \mathcal P_h^*\times H^1(\Om)$ is a minimizer of the functional \eref{F_prop}  with $\lambda=\infty$, $v$ should be given by
$$v=\sum^K_{k=1}p^{(k)}v_k.$$
Regarding the pressure $p$ as a vector $\mathbf{p}=\left( p^{(1)},~p^{(2)},\ldots,~p^{(K)}\right)\in\mathbb{R}^K$, the discretized minimization problem (\ref{F_prop}) for large $\lambda$ can be simplified as
\begin{equation}\label{F_quad}
\mathcal{J}_3(\mathbf{p})=\sum_{i,j=1}^K\left| \mathbf{p}^T\mathbb{Q}^{i,j}\mathbf{p}-\mathcal{W}^{i,j}\right|^2+\beta\|\mathbf{p}\|_2^2,
\end{equation}
where each $\mathbb{Q}^{i,j}$ is a matrix given by
$$\mathbb{Q}^{i,j}=\left(\begin{array}{cccc}
S_{11}^{ij}& S_{12}^{ij} & \cdots &S_{1K}^{ij}\\
S_{21}^{ij}& S_{22}^{ij} & \cdots &S_{2K}^{ij}\\
\vdots&\vdots& &\vdots\\
S_{M1}^{ij}&S_{M2}^{ij}&\cdots&S_{KK}^{ij}
\end{array}\right)_{K\times K}$$
and $S_{k\ell}^{ij}$ is given  by
\begin{equation}\label{S}
S_{k\ell}^{ij}:= \int_{\Om}\left([\na v_k\na v_{\ell}^T]\na u_0^i\right)\cdot\na u_0^jd\x.
\end{equation}
Here, the quadratic term $\mathbf{p}^T\mathbb{Q}^{i,j}\mathbf{p}$ in (\ref{F_quad}) can be viewed as a good approximation of the quantity  ${\displaystyle \int_\Om\left( \frac{\na w_p\na w_p^T}{1+|\na w_p|^2}\na u_p^i\right)\cdot \na u_0^jd\x }$ in terms of the discretized $\mathbf{p}$.

The quadratic form of (\ref{F_quad}) can be converted  to a linear form by introducing the vector ${\mathbf q}=\left(q^{(1)},\cdots, q^{(K^2)}\right)\in \R^{K^2}$ whose components are
$$
q^{(k+\ell(K-1))}=p^{(k)}p^{(\ell)}\q\q\mbox{for } k,\ell=1,\cdots, K.
$$
With this large vector ${\mathbf q}$, the quadratic form $\mathcal{J}_3(\mathbf{p})$ in \eref{F_quad} can be changed into the following linear form:
\begin{equation}\label{F_lin}
\mathcal{J}_4(\mathbf{q})=\left\| \mathbb{S}\mathbf{q}-\mathbf{W}\right\|^2_2+\beta\|\mathbf{q}\|_2^2,
\end{equation}
where $\mathbb{S}$ is $N^2\times K^2$ matrix given by
\begin{equation}\label{senseM}
\left(~(i-1)N+j~,~ (k-1)M+\ell~\right)-\mbox{component of } \mathbb{S} ~~=~~ S_{k\ell}^{ij},
\end{equation}
and
\begin{equation}\label{meausre_vector}
\mathbf{W}=\left( \mathcal{W}^{1,1}\cdots~\mathcal{W}^{1,N}~\mathcal{W}^{2,1}
~~\ldots~\ldots~~\mathcal{W}^{N,1}\cdots~\mathcal{W}^{N,N}\right)\in\mathbb{R}^{N^2}.
\end{equation}

Now, the minimizer of the functional $\mathcal{J}_4$ in (\ref{F_lin}) can be obtained by solving the following linear system:
\begin{equation}\label{lin_eq}
(\mathbb{S}^T\mathbb{S}+\sqrt{\beta}I)\mathbf{q}=\mathbb{S}^T\mathbf{W}.
\end{equation}
Unfortunately, the linear system \eref{lin_eq} is too large to handle; the number of column vectors  of $\mathbb{S}$ is proportional to $h^{-4}$ where $h^2$ is proportional to the mesh size. Hence, we need to eliminate most of the column vectors of the matrix $\mathbb{S}$ whose influence on the data are negligibly small. Noting that $k+\ell(K-1)$-th column vector of $\mathbb{S}$ consists of components $S_{k\ell}^{ij}=\int_{\Om}[\na v_k\na v_{\ell}^T] \na u_0^i\cdot\na u_0^jd\x$, the quantity $\sup_{i,j}|S_{k\ell}^{ij}|$ can be estimated by $\int_{\Om}\left|\na v_k\na v_{\ell}^T\right|d\x$. We will see that the quantity $\int_{\Om}\left|\na v_k\na v_{\ell}^T\right|d\x$ decreases as $\mbox{dist}(T_{k},T_{\ell})$ increases.

Since $p$ is supported in $\Om_{d_0}$, we assume that $\mbox{dist}(T_k, \p\Om)\geq d_0$.
 Using the expression \eref{Green-rep} of $v_k$, we have
\begin{align}\label{green1}
\int_{\Om}\left|\na v_k\na v_{k}^T\right|d\x&= \int_\Om |\na v_k|^2 d\x=
-\int_{\Om} \Delta v_k v_k d\x\nonumber\\
&= \int_{T_k} \int_{T_k} G(\x,\y) d\x d\y \gtrsim h^4\left|\log h\right|,
\end{align}
where the expression  $X\gtrsim Y$ is used to mean that there is a positive constant $C$ independent of $h$ such that $X\ge C Y$. On the other hand, if $\mbox{dist}(T_k,T_\ell)>d_0$, then we have
\begin{align}\label{green2}
\int_{\Om}\left|\na v_k\na v_{\ell}^T\right|d\x &\lesssim\int_\Om \left(\int_{T_k}|\na G(\x,\y)| d\y \int_{T_\ell}|\na G(\x,\y')| d\y'\right) d\x\nonumber\\
& \lesssim h^4\int_\Om |\na G(\x,\z_k)||\na G(\x,\z_\ell)| d\x\nonumber\\
& \lesssim h^4\f{1}{|\z_k-\z_\ell|},
\end{align}
where  $\z_k$ and $\z_\ell$ are the gravitational centers of $T_k$ and $T_\ell$, respectively. From the above estimate, {we observe that  $\sup_{i,j}|S_{k\ell}^{ij}|$ is negligibly small if $\mbox{dist}(T_{k},T_{\ell})$ is large.}

For $\delta\ge 0$, let $\mathbb{S}_\delta$ be the reduced  matrix of $\mathbb{S}$ by eliminating all columns corresponding to pairs $(k,\ell)$ in the set $\mathcal K_\delta$:
$$ \mathcal K_\delta:=\{ (k,\ell)~: \mbox{dist}(T_{k},T_{\ell})> \delta\}.$$
{The parameter $\delta$ indicates the number of columns in $\mathbb{S}$ to be used in order to solve the linear system \eref{lin_eq}. In the case that $\delta< h$ ($\mathcal{K}_\delta=\{(k,\ell)~:~k\neq \ell \}$), we only consider the diagonal terms $(k,k)$, and neglect most of columns in $\mathbb{S}$. When $\delta=\diam(\Om)$, we consider all the pairs $(k,\ell)$ without neglecting any column in $\mathbb{S}$ $(\mathcal{K}_\delta=\emptyset)$. }
Denoting the corresponding reduced vector  of $\mathbf{q}$ by $ \mathbf{q}_\delta$, the large linear system  \eref{lin_eq} can be approximated by the following reduced system:
\begin{equation}\label{reduc_lin}
(\mathbb{S}_\delta^T\mathbb{S}_\delta+\sqrt{\beta}I)\mathbf{q}_\delta=\mathbb{S}_\delta^T\mathbf{W}.
\end{equation}
In our numerical experiments, $\delta$ is chosen to be less than six times the diameter of meshes.

 Based on the minimization problem\eqref{F_prop} with the above reduction strategy, we develop the following pressure reconstruction algorithm\label{recon_algo}.

\begin{center}
\vspace{0.1cm}\begin{minipage}{0.9\linewidth}
\begin{itemize}
 \item[\it Step 1.] Set a reduction parameter $\delta\geq 0$ and compute the reduced sensitivity matrix $\mathbb{S}_\delta$.
 \item[\it Step 2.] Solve \eref{reduc_lin} with a regularization parameter $\beta$ to obtain $\mathbf{q}_\delta$.
 \item[\it Step 3.] Take square-root of the components $\left(p^{(k)}\right)^2$ in $\mathbf{q}_\delta$ and obtain $\mathbf{p}$. When $\mathbf{q}_\delta$ has negative values, we truncate the negative values in $\mathbf{q}_\delta$ before taking the square-root.
 \end{itemize}
 \end{minipage}
\end{center}

\section{Numerical results}\label{sec:numerical_results}
We test the performance of our reconstruction algorithm \ref{recon_algo}. Finite element method (FEM) is used to implement our algorithm.  We perform numerical experiments in two different  domain shapes: a square shaped domain with $K=512$ uniform triangular pixels, a circular shaped domain with $K= 661$ uniform triangular pixels. For each domain, we apply pressures being characteristic functions with different supports, namely $p(x,y)=p_0 \chi_{D}(x,y)$ where $D$ consists of three or four pressured regions  depicted in the first columns of Figures \ref{simu1}-\ref{simu2}.

With these pressure distributions, we compute  the current-voltage data \eref{measure_data} by solving  \eref{young-laplace} and \eref{Mp-PDE} with a 16-channel EIT-system ($N=16$). For the reconstruction of $p$, we use  the reconstruction algorithm \eref{reduc_lin}, described in section \ref{recon_algo}, with the reduction parameter $\delta=5h$, where $h$ is the side length of triangular mesh. We compare its performance with those obtained by the conventional EIT reconstruction method and  the method \eref{reduc_lin} with  $\delta< h$. The size of matrices $\mathbb{S}_\delta$ for various $\delta$ are given in Table \ref{table}. For the numerical simulation, the same regularization parameter $\beta$ is used to solve \eref{reduc_lin}.

 Figure \ref{simu1}-\ref{simu2} (a) and (e) show the true pressure distributions having different supports and the same magnitude. Figure \ref{simu1}-\ref{simu2}(b) and (f) display the conductivity variations measured by conventional EIT-method with respect to (a) and (e), respectively. Figure \ref{simu1}-\ref{simu2}(c) and (g) show the reconstructed pressure distributions by the proposed reconstruct algorithm with $\delta< h$. Figure \ref{simu1}-\ref{simu2}(d) and (h) show the reconstructed pressure distributions by the proposed method.   We observe through Figure \ref{simu1}-\ref{simu2}(c) and (d) that the proposed reconstruction algorithm with the higher reduction parameter $\delta=5h$ provides more better image than $\delta<h$ for the detection of the pressured regions.

We should mention that, in the case when the magnitude of the pressure is sufficiently small, the conventional linearized EIT reconstruction method work well with the selection of a good regularization parameter. However, this regularization method does not work well when the pressure is not small.

 \begin{table}[!ht]
\centering
\caption{\label{table} The matrix size of the reduced sensitivity matrix $\mathbb{S}_\delta$. Higher value of $\delta$ indicates that more columns of the sensitivity $\mathbb{S}$ are used to solve \eref{reduc_lin}. The size of $\mathbb{S}$ is given in the last row ($\delta=\diam(\Om)$). Here, $h$ is the side length of the triangular mesh. }
\footnotesize{
\begin{tabular}{@{}*{7}{c}}
\hline
~$\delta$  & The size of ~$\mathbb{S}_\delta$ (square)\cr
\hline
0 &  $256\times 512$\cr
$5h$ &  $256\times 28018$\cr
$\diam(\Om)$ &  $256\times 262144$\cr
\hline
\end{tabular}\begin{tabular}{@{}*{7}{c}}
\hline
~$\delta$  & The size of ~$\mathbb{S}_\delta$ (circle)\cr
\hline
0 &  $256\times 661$\cr
$5h$ &  $256\times 43814$\cr
$\diam(\Om)$ &  $256\times 436921$\cr
\hline
\end{tabular}
}
\end{table}

\begin{figure}[ht]
\centering
\begin{tabular}{cccc}
\small
\includegraphics[width=0.22\linewidth]{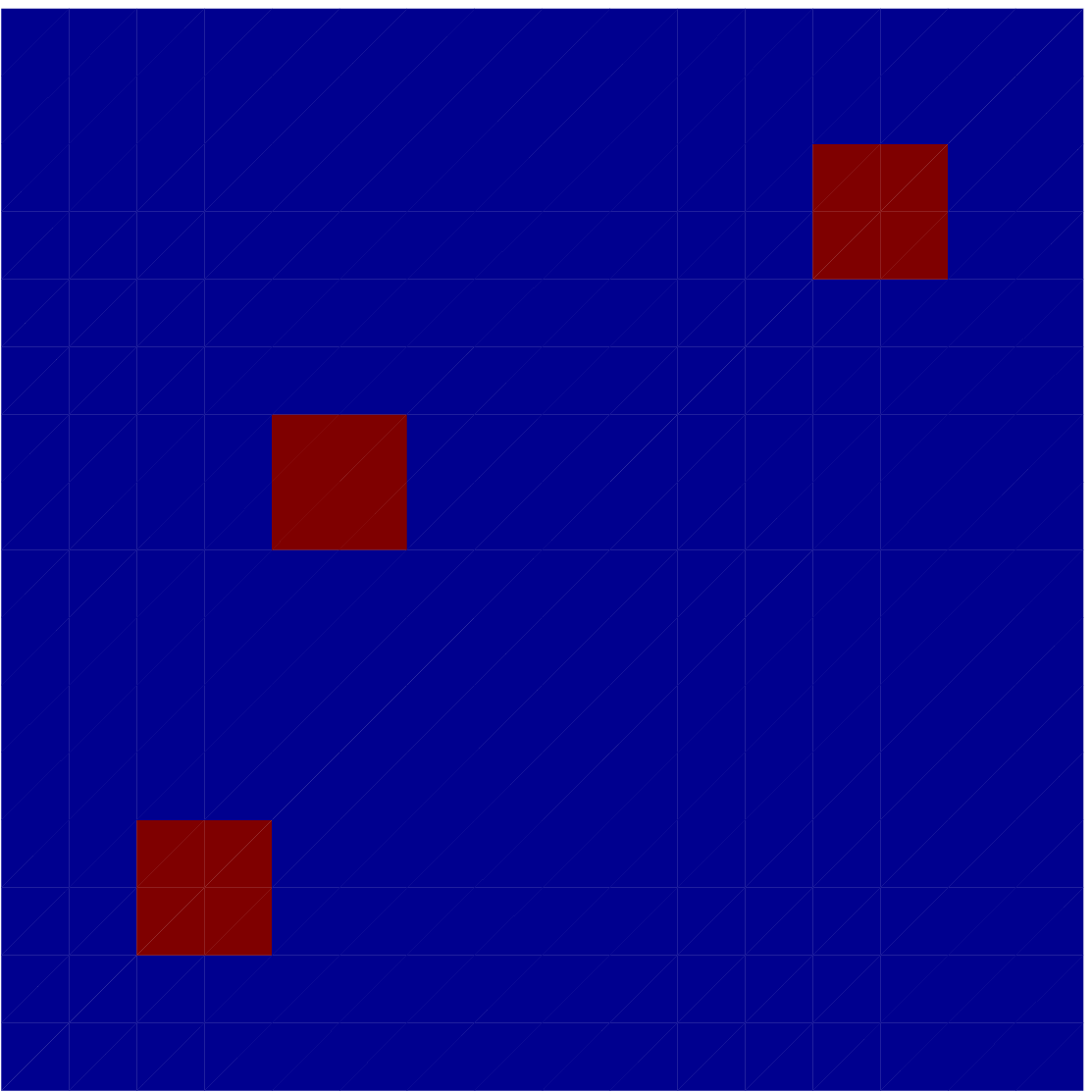}&
\includegraphics[width=0.22\linewidth]{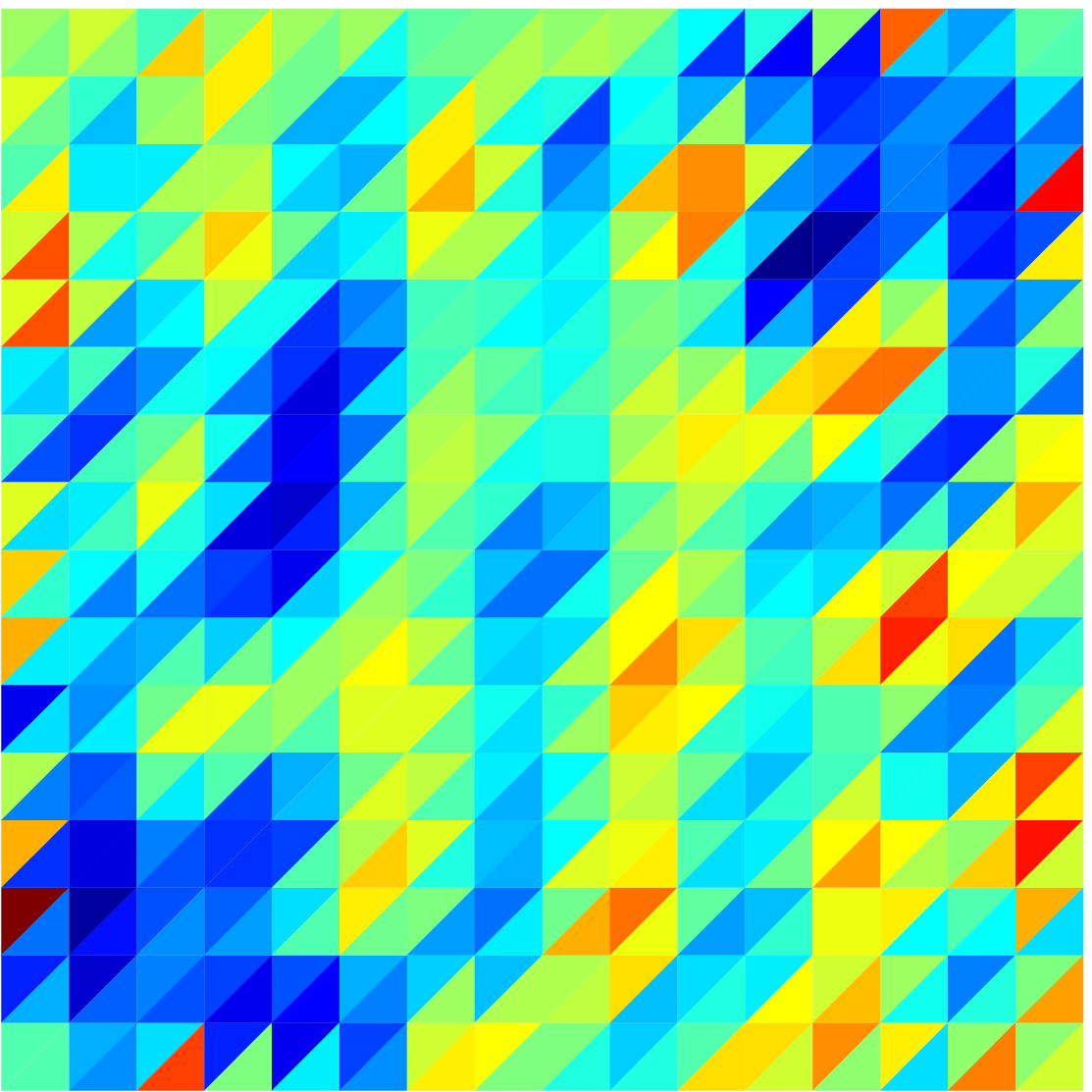}&
\includegraphics[width=0.22\linewidth]{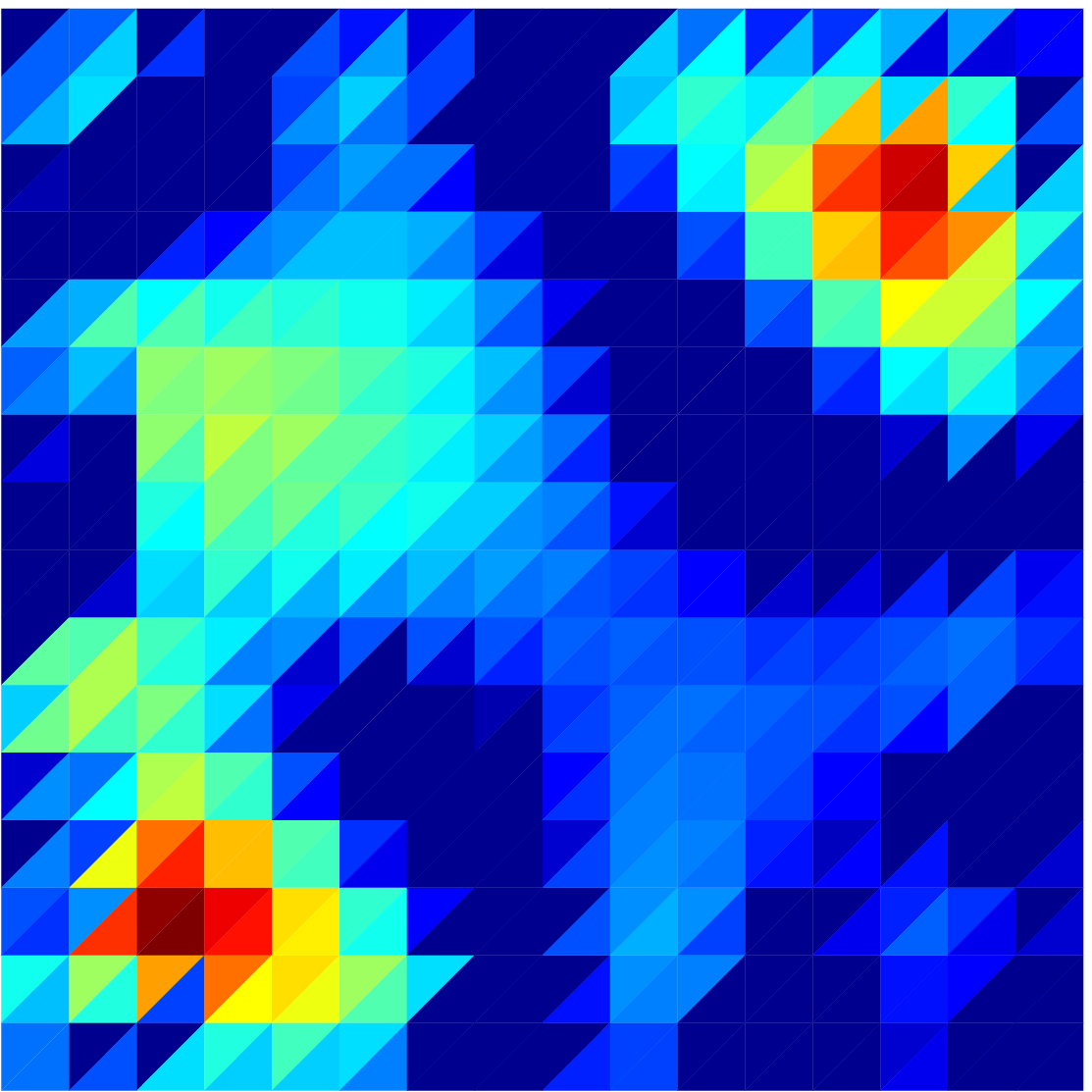}&
\includegraphics[width=0.22\linewidth]{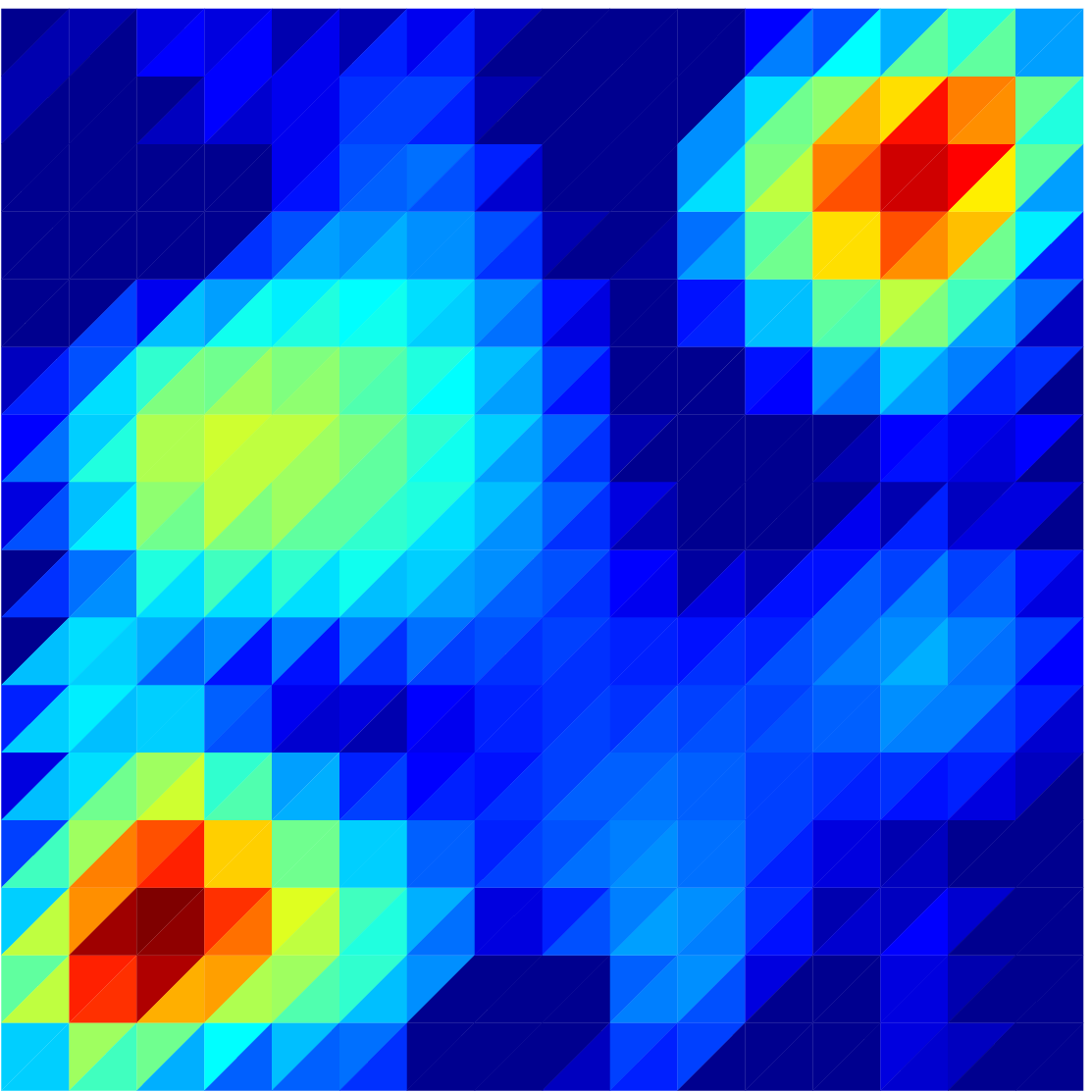}\\
 \begin{tabular}{c}\small(a) true pressure\\ \small  \end{tabular}  &  \begin{tabular}{c}\small(b) conventional\\ \small method \end{tabular} & \begin{tabular}{c}\small(c) reconstruction\\ \small algorithm($\delta< h$)\end{tabular}  &  \begin{tabular}{r}\small(d) proposed\\ \small method \end{tabular} \\
\includegraphics[width=0.22\linewidth]{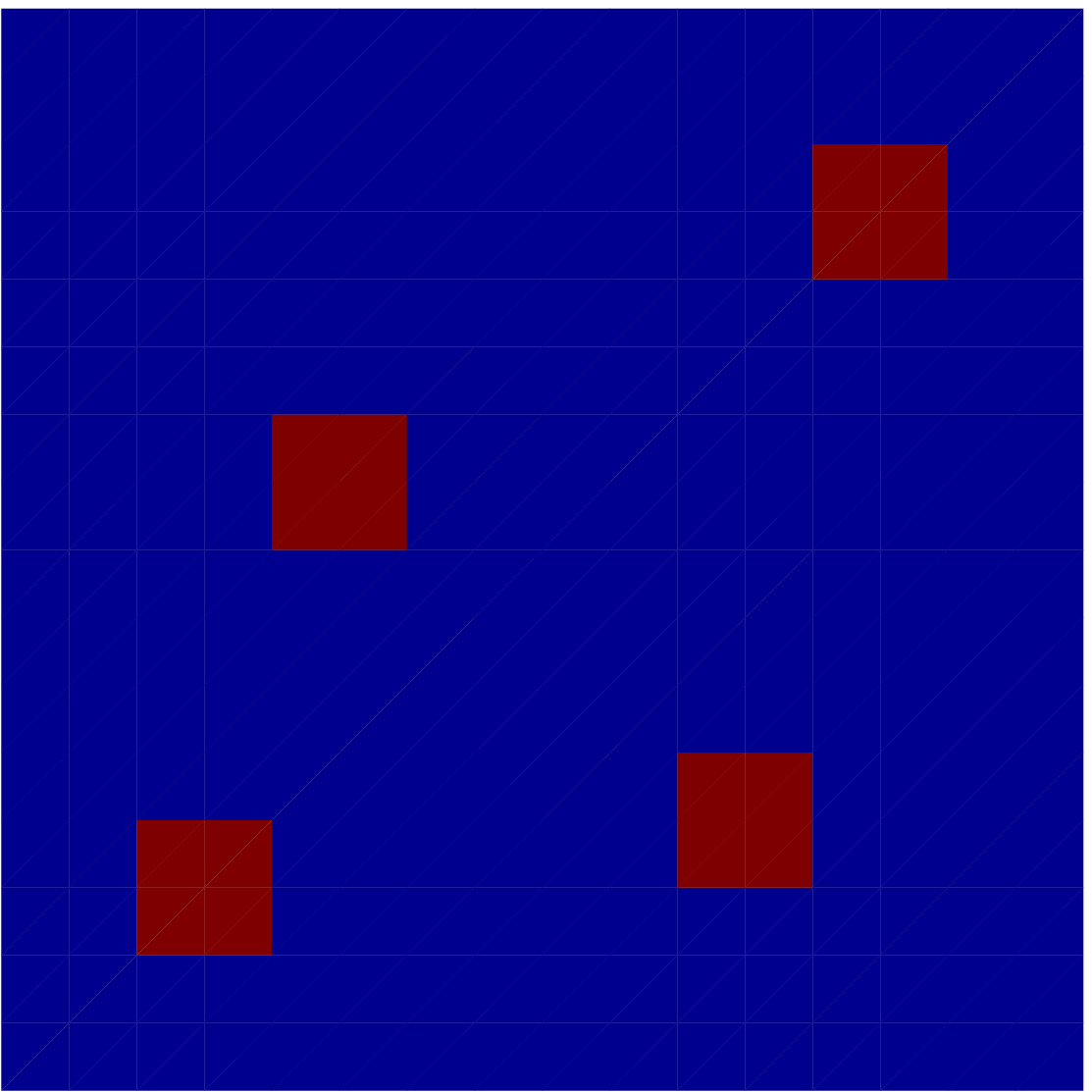}&
\includegraphics[width=0.22\linewidth]{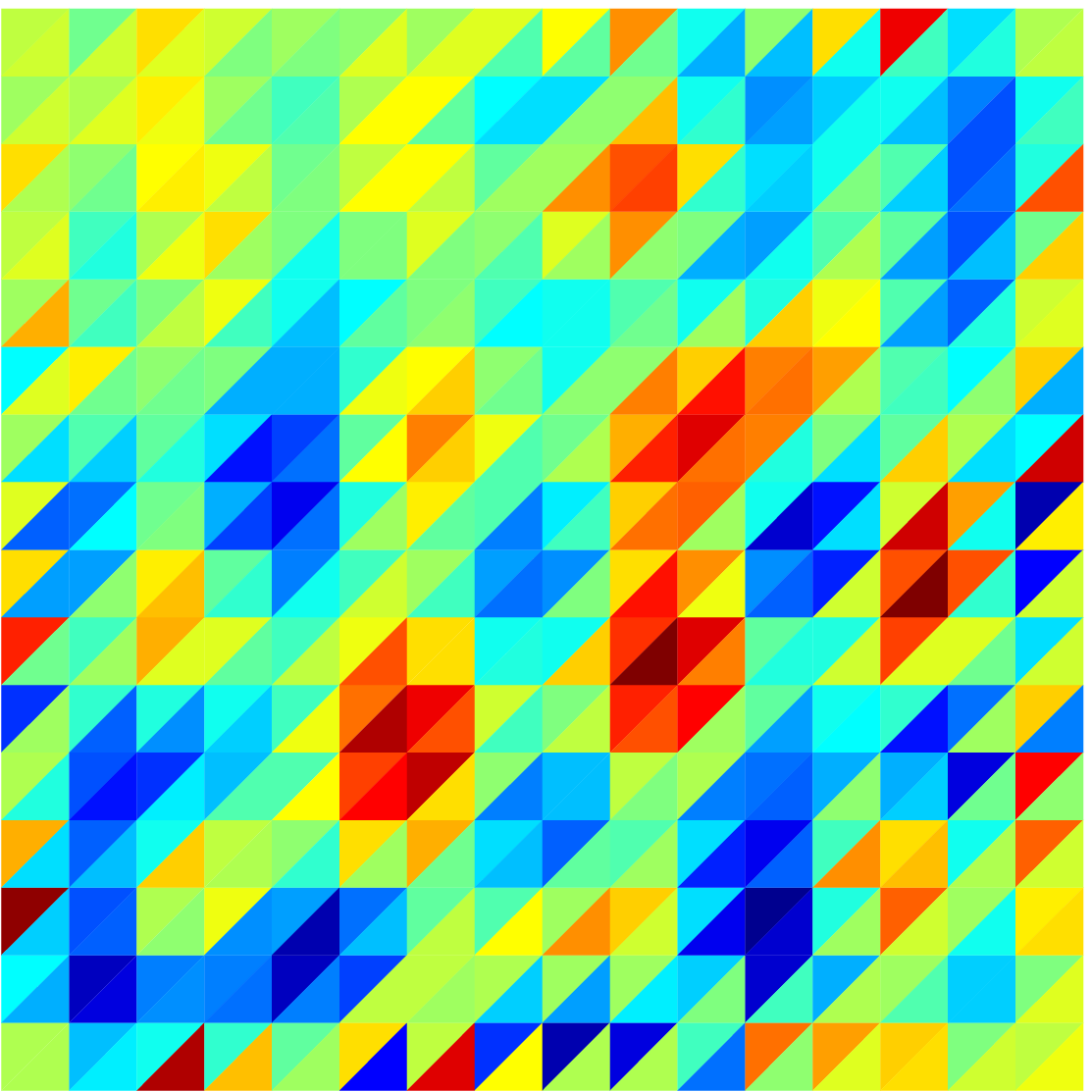}&
\includegraphics[width=0.22\linewidth]{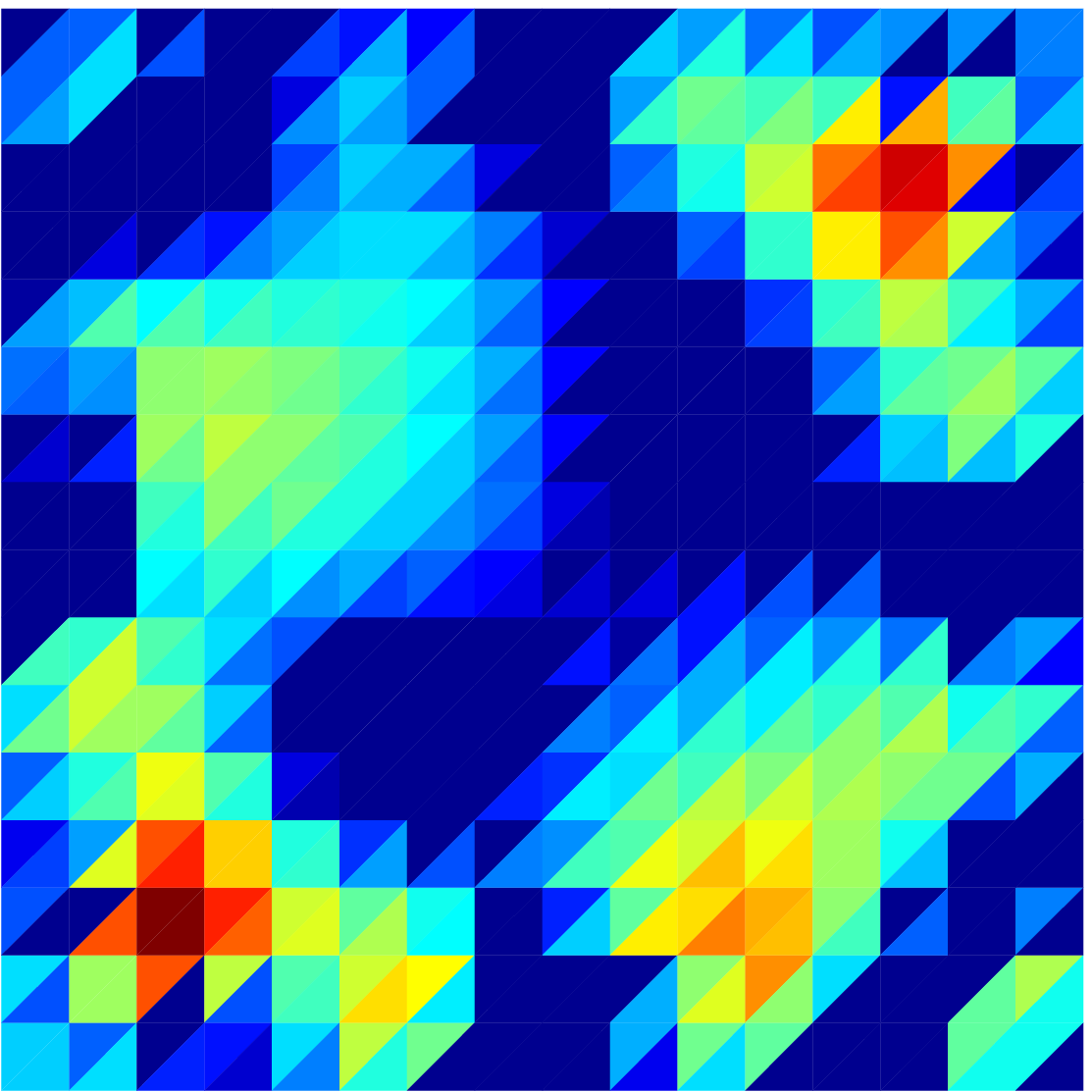}&
\includegraphics[width=0.22\linewidth]{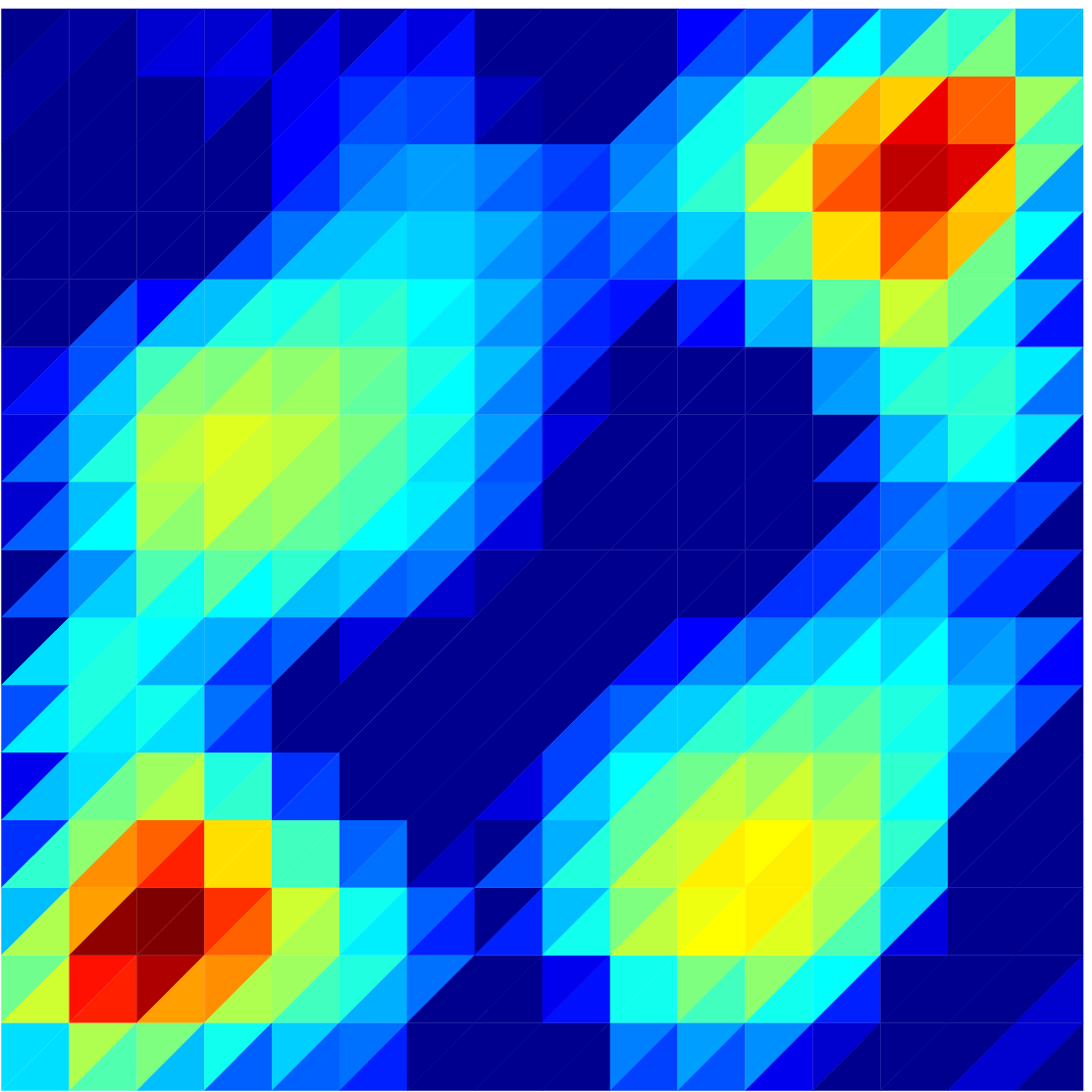}\\
 \begin{tabular}{c}\small(e) true pressure\\ \small  \end{tabular}  &  \begin{tabular}{c}\small(f) conventional\\ \small method \end{tabular} & \begin{tabular}{c}\small(g) reconstruction\\ \small algorithm($\delta< h$)\end{tabular}  &  \begin{tabular}{r}\small(h) proposed\\ \small method \end{tabular} \\
\end{tabular}
\caption{\label{simu1}True pressure distributions(first column), the conductivity variation images by conventional EIT-method(second column), and the reconstructed pressure images using the reconstruction algorithm \ref{recon_algo} with $\delta< h$(third column), $\delta=5h$(fourth column). Here, $h$ is the side length of the triangular mesh.}
\end{figure}
\begin{figure}[ht]
\centering
\begin{tabular}{cccc}
\includegraphics[width=0.22\linewidth]{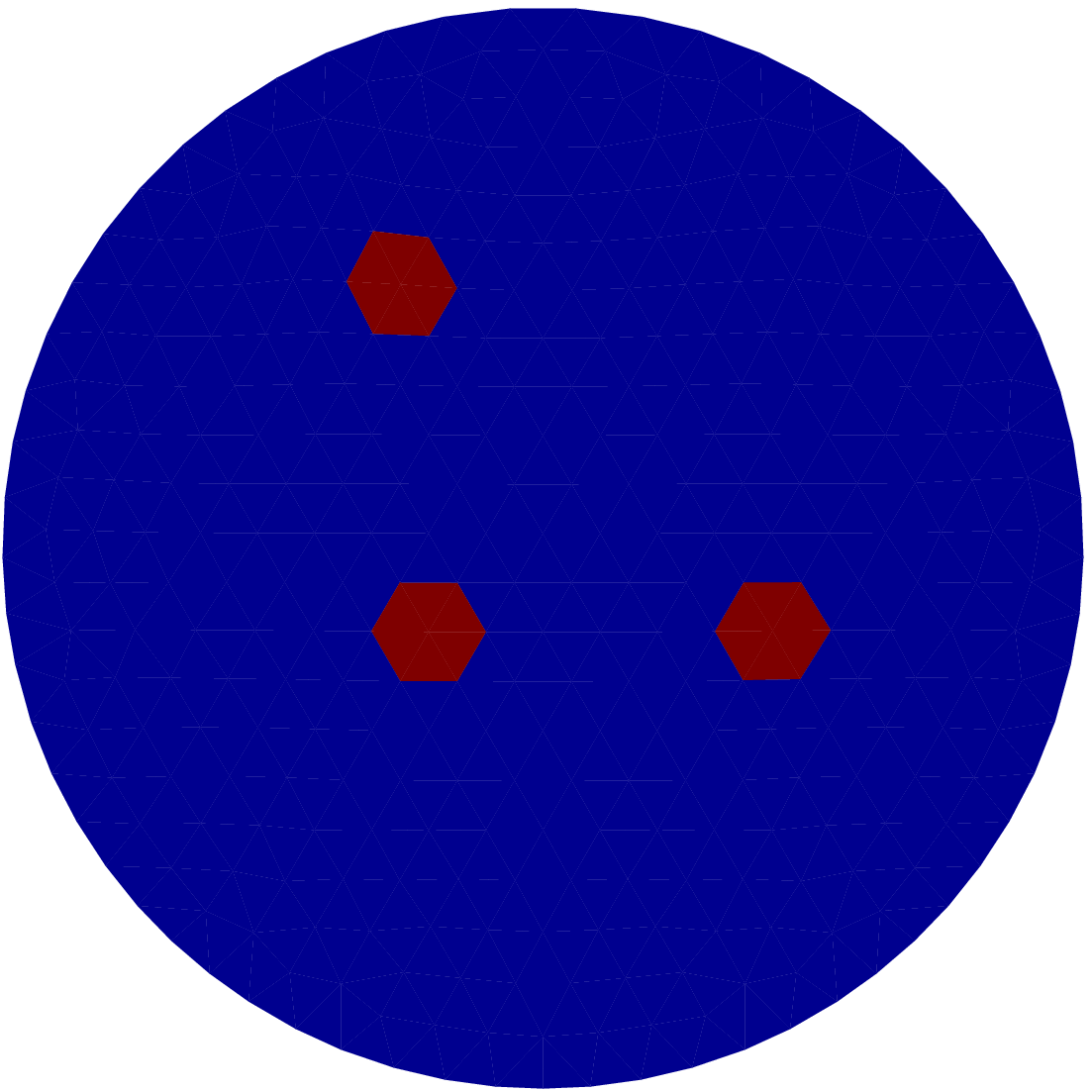}&\includegraphics[width=0.22\linewidth]{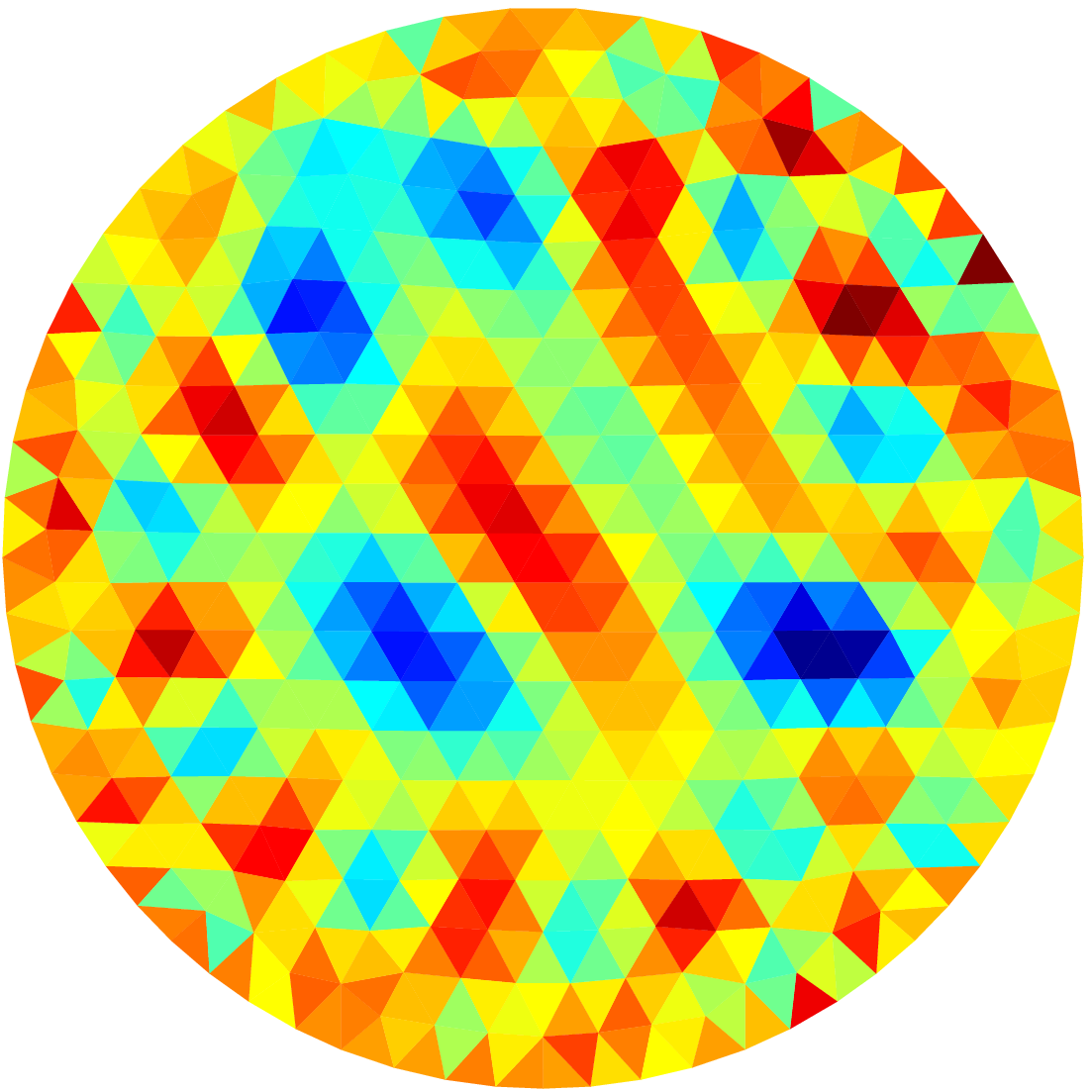}&\includegraphics[width=0.22\linewidth]{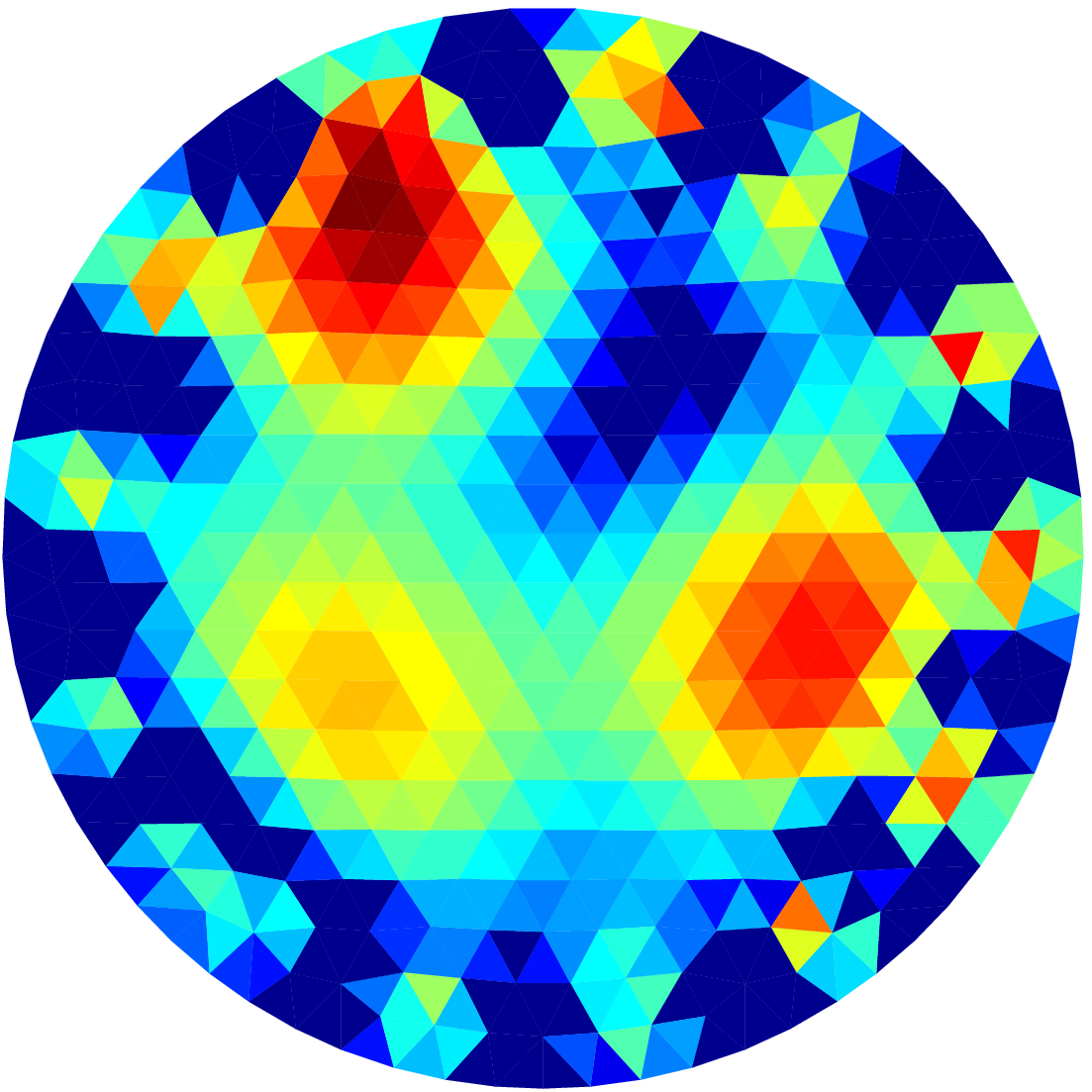}&
\includegraphics[width=0.22\linewidth]{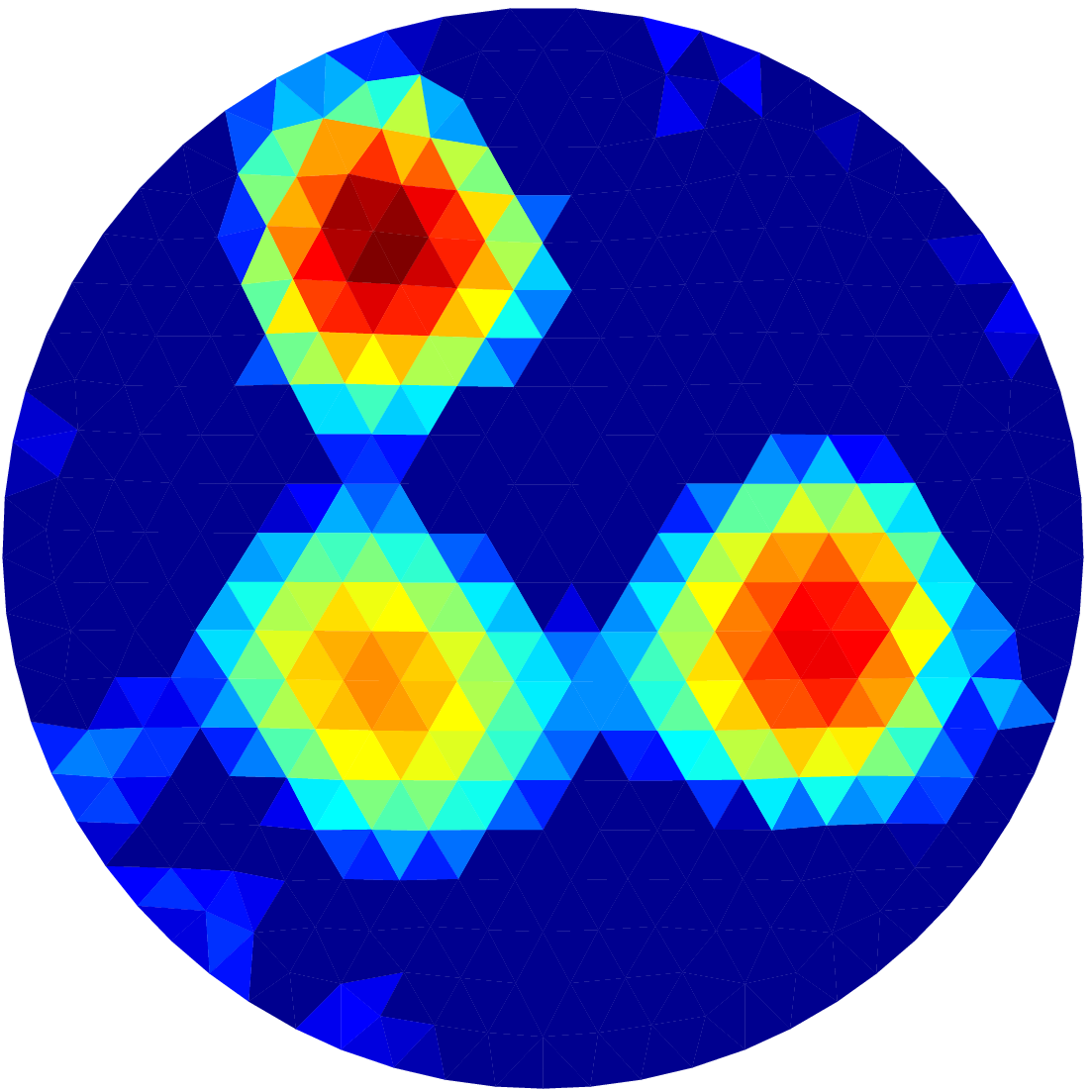}\\
 \begin{tabular}{c}\small(a) true pressure\\ \small  \end{tabular}  &  \begin{tabular}{c}\small(b) conventional\\ \small method \end{tabular} & \begin{tabular}{c}\small(c) reconstruction\\ \small algorithm($\delta< h$)\end{tabular}  &  \begin{tabular}{r}\small(d) proposed\\ \small method \end{tabular} \\
\includegraphics[width=0.22\linewidth]{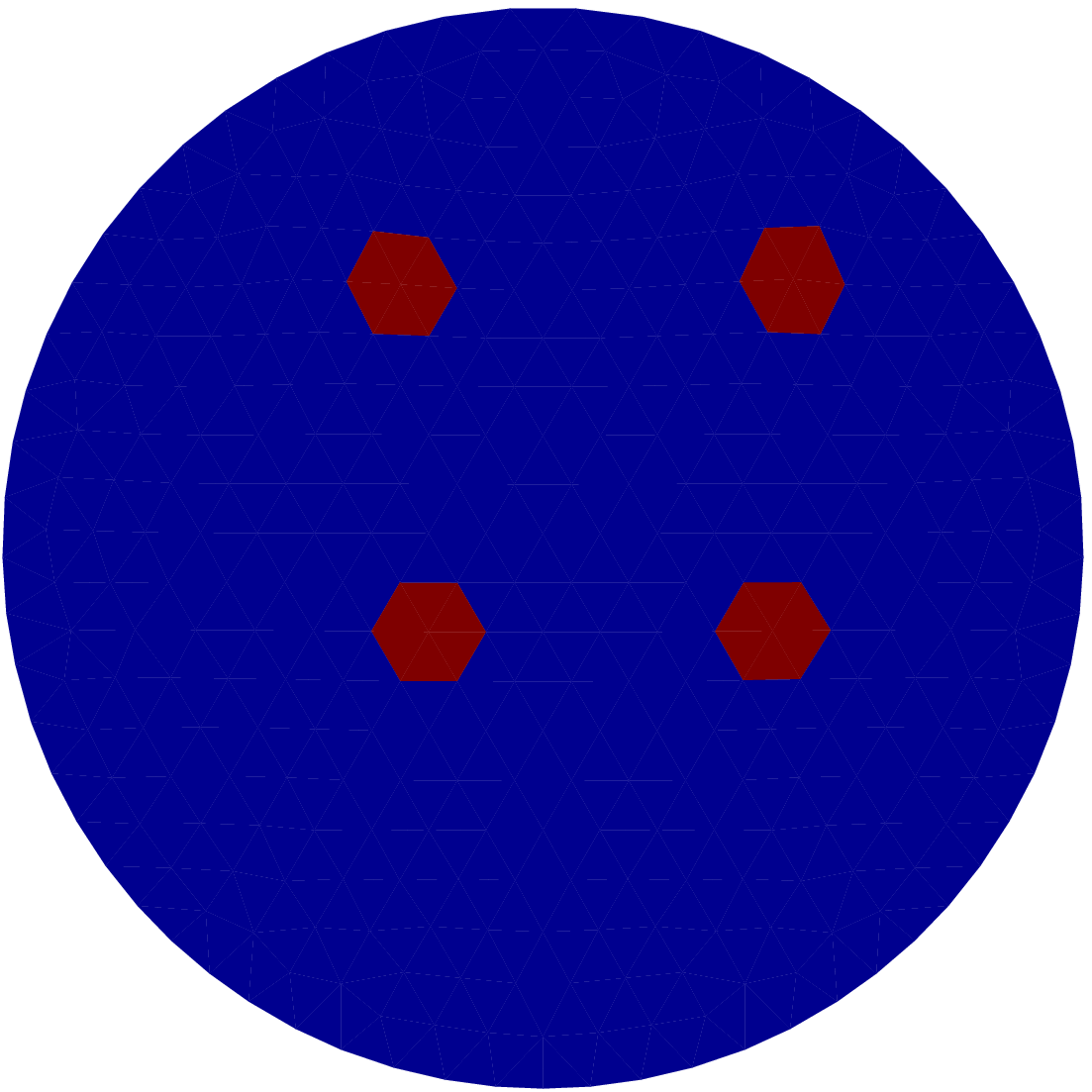}&\includegraphics[width=0.22\linewidth]{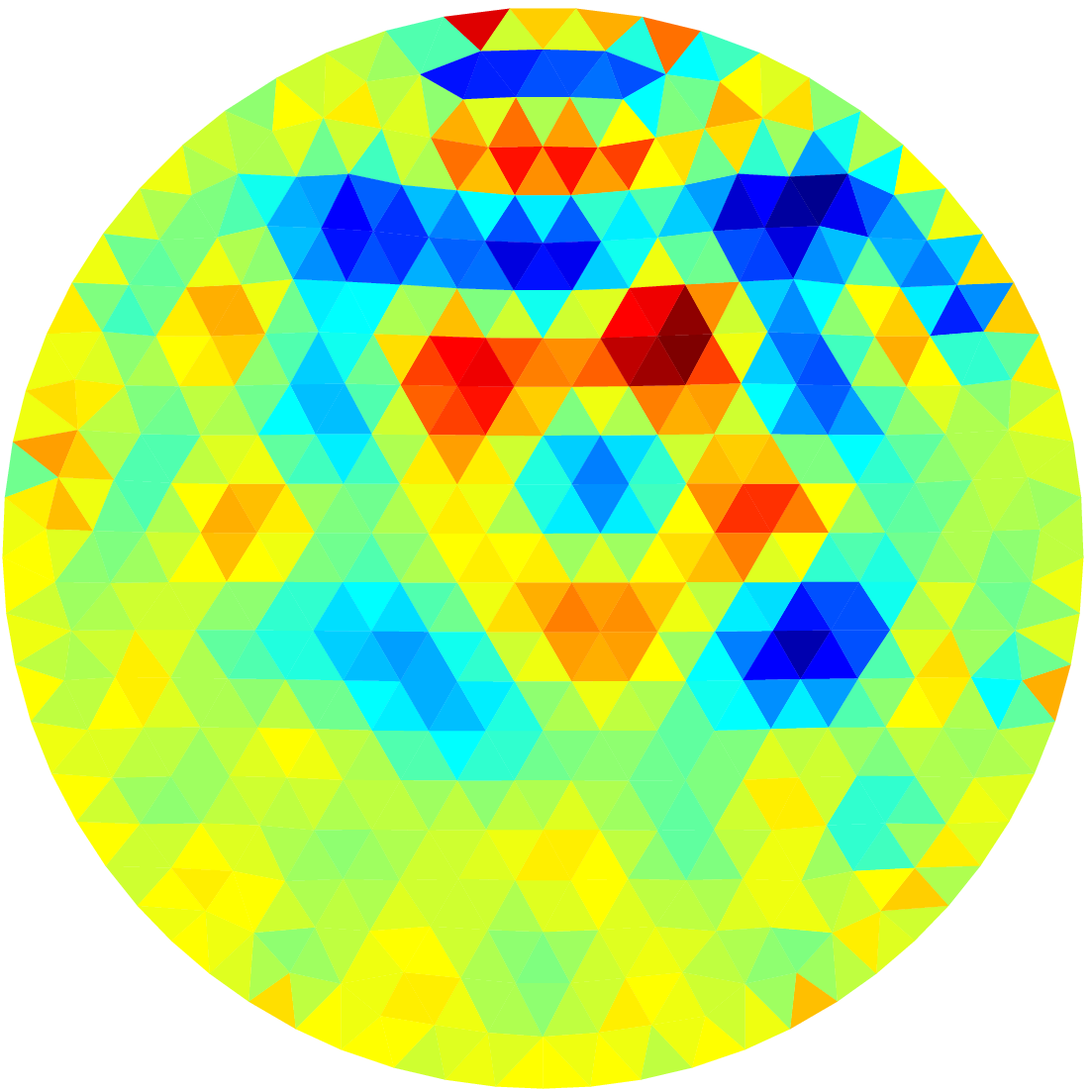}&\includegraphics[width=0.22\linewidth]{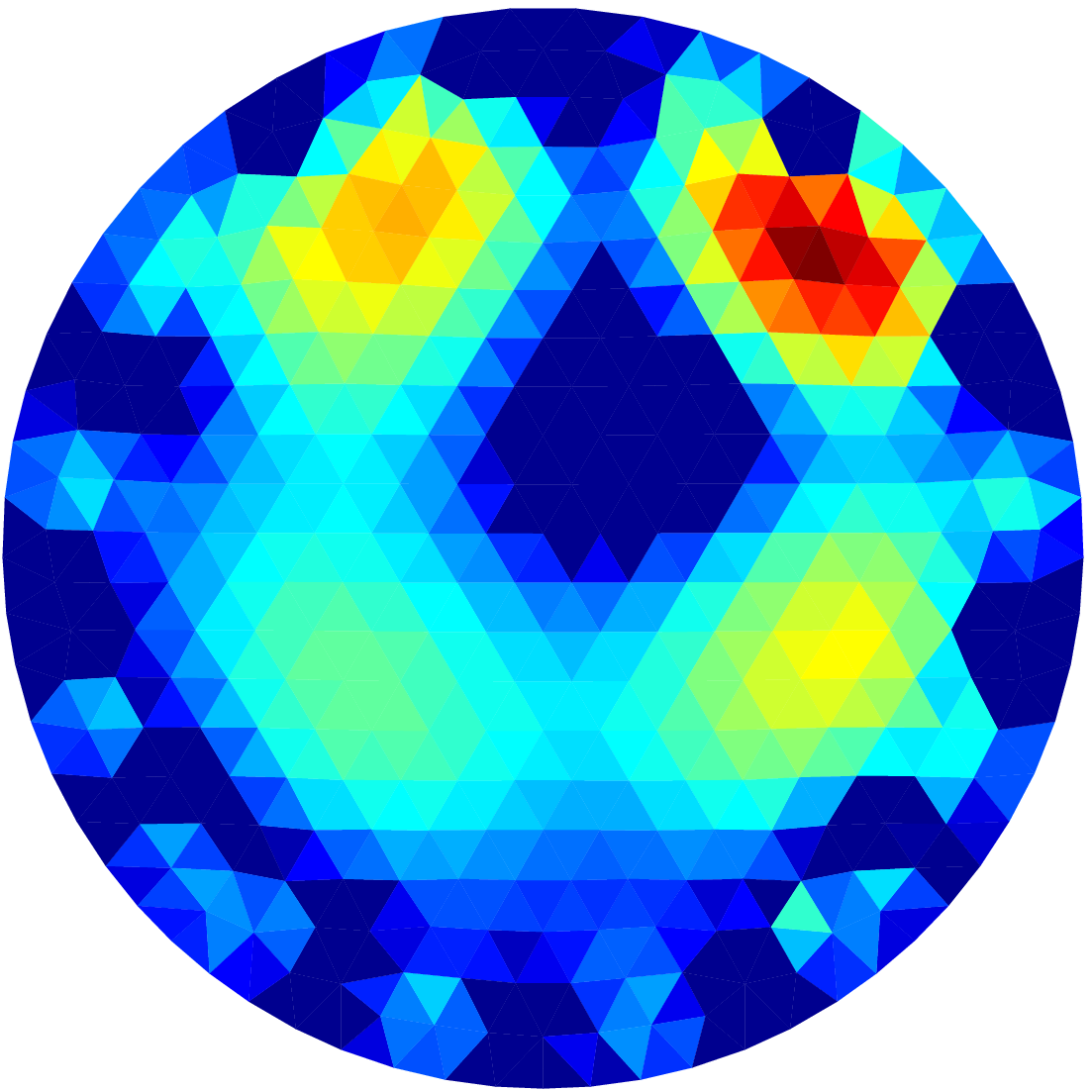}&\includegraphics[width=0.22\linewidth]{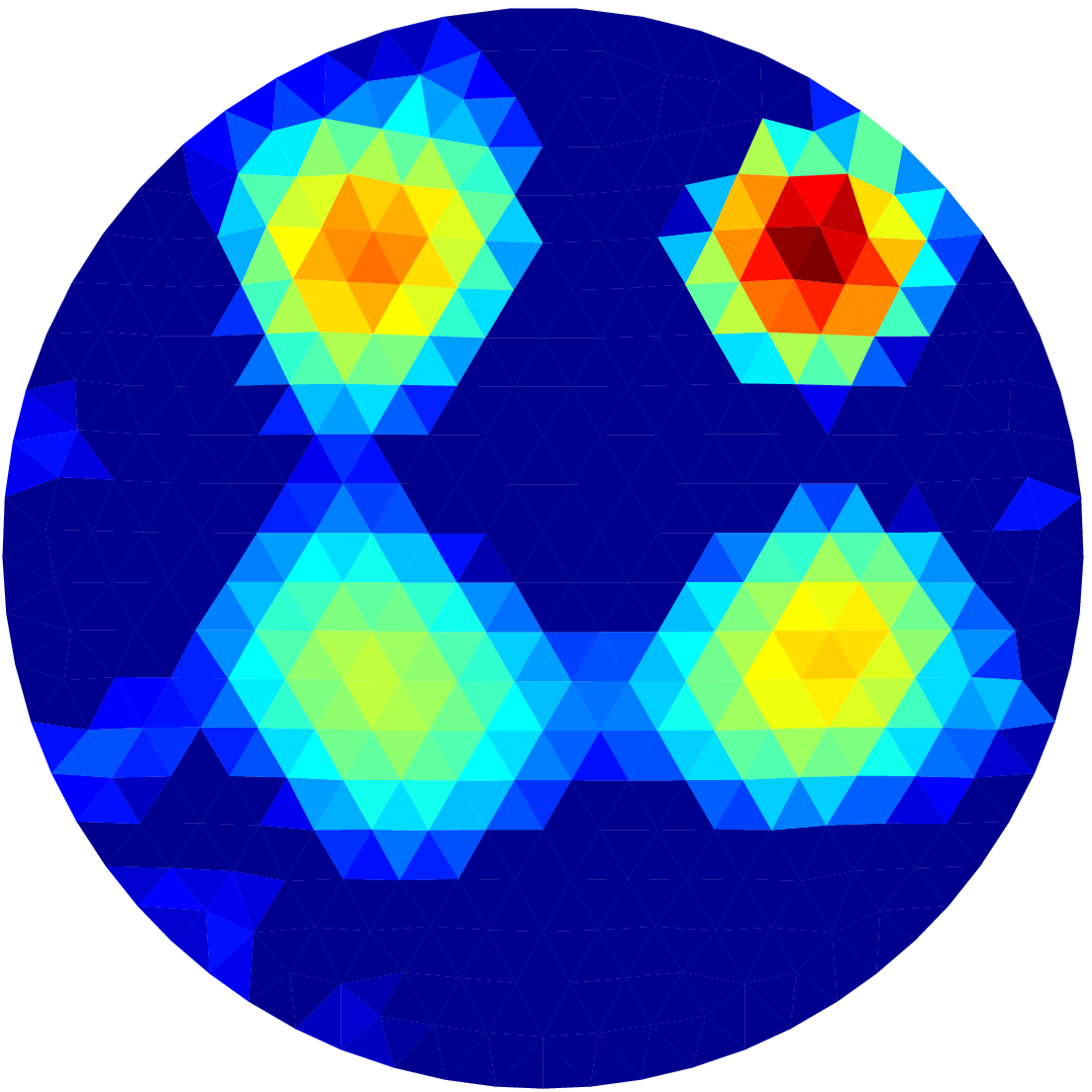}\\
 \begin{tabular}{c}\small(e) true pressure\\ \small  \end{tabular}  &  \begin{tabular}{c}\small(f) conventional\\ \small method \end{tabular} & \begin{tabular}{c}\small(g) reconstruction\\ \small algorithm($\delta< h$)\end{tabular}  &  \begin{tabular}{r}\small(h) proposed\\ \small method \end{tabular} \\
 \end{tabular}
\caption{\label{simu2}True pressure distributions(first column), the conductivity variation images by conventional EIT-method(second column), and the reconstructed pressure images using the reconstruction algorithm \ref{recon_algo} with $\delta< h$(third column), $\delta=5h$(fourth column). Here, $h$ is the side length of the triangular mesh.}
\end{figure}

\section{Conclusion}

We have provided a mathematical framework of an EIT-based pressure-sensor for the development of an image reconstruction algorithm.  We have derived the first mathematical model describing the electromechanical properties of a conductive membrane with the standard EIT system.  We have found that the geometric variation of the membrane due to an applied pressure produces anisotropic conductance variation. Hence, the corresponding inverse problem of recovering the anisotropic conductivity distribution cannot be addressed  by the EIT method alone. Under the assumption that the geometric variation is not too large, we have developed a reconstruction algorithm based on a sensitivity matrix of  current-voltage data arising from small perturbations of pressure. Numerical simulations have indicated the  feasibility of the EIT-based pressure sensor by successfully recovering pressures.

The proposed model is based on the incompressibility assumption which may not be satisfied by various flexible conductive materials, such as conductive fabrics. Recently,
Bera \etal   \cite{Bera2014} investigated the electromechanical properties of different conductive fabrics through  electrical impedance spectroscopy.  Because fabrics are flexible, durable, and washable, EIT-based fabric pressure sensors should be widely applicable as, for example, wearable sensors.   Constructing a suitable mathematical model will be very complicated because the interactions of the conductive yarns and the air gaps among them should be taken into account. This will be the subject of future studies.

%\section*{References}

\end{document}